\newtheorem{main}{Theorem}[section]
\newtheorem{theorem}{Theorem}[section]
\newtheorem{proposition}[theorem]{Proposition}
\newtheorem{lemma}[theorem]{Lemma}
\newtheorem{corollary}[theorem]{Corollary}
\theoremstyle{definition}
\newtheorem{definition}[theorem]{Definition}
\newtheorem{defn}[theorem]{Definition}
\newtheorem{example}[theorem]{Example}
\theoremstyle{remark}
\newtheorem{remark}[theorem]{Remark}
\numberwithin{equation}{section}
\DeclareMathOperator{\inj}{inj}
\DeclareMathOperator{\cell}{cell}
\DeclareMathOperator{\cof}{cof}
\newcommand{\M}{\mathcal{M}}
\newcommand{\sF}{\mathscr{F}}
\newcommand{\sW}{\mathscr{W}}
\newcommand{\sQ}{\mathscr{Q}}
\newcommand{\algo}{\mbox{Alg}_{\cat O}}
\renewcommand{\emptyset}{\varnothing}
\renewcommand{\tilde}[1]{\widetilde{#1}}
\DeclareMathOperator{\dgcat}{dgcat}
\DeclareMathOperator{\colim}{colim}
\DeclareMathOperator{\hocolim}{hocolim}
\newcommand{\calm}{\mathcal{M}}
\newcommand{\calc}{\mathcal{C}}
\newcommand{\lcm}{L_{\mathcal{C}}\calm}
\newcommand{\po}{\ar@{}[dr]|(.7){\Searrow}}
\newcommand{\pb}{\ar@{}[dr]|(.3){\Nwarrow}}
\newcommand{\cat}[1]{\mathcal{#1}}
\title{Left Bousfield localization without left properness}
\author{Michael Batanin}
\address{Institute of Mathematics of Czech Academy of Sciences, Zitna 25, Prague 1, The Czech Republic, and MFF UK, Sokolovsk\'{a} 83, 186 75 Prague 8, The Czech Republic}
\email{bataninmichael@gmail.com}
\thanks{The first author gratefully acknowledges the financial support of the IHES in Paris, Praemium Academiae of M. Markl, RVO:67985840, and the grant GA\v{C}EXPRO19-28628X}
\author{David White}
\address{Department of Mathematics \\ Denison University
\\ Granville, OH 43023}
\email{david.white@denison.edu}
\thanks{The second author was supported by the National Science Foundation under Grant No. IIA-1414942, and by the Australian Category Seminar of Macquarie University.}
\begin{document}

\maketitle

\begin{abstract}
Given a combinatorial (semi-)model category $M$ and a set of morphisms $\cat C$, we establish the existence of a semi-model category $L_{\cat C} M$ satisfying the universal property of the left Bousfield localization in the category of semi-model categories. Our main tool is a semi-model categorical version of a result of Jeff Smith, that appears to be of independent interest. Our main result allows for the localization of model categories that fail to be left proper. We give numerous examples and applications, related to the Baez-Dolan stabilization hypothesis, localizations of algebras over operads, chromatic homotopy theory, parameterized spectra, $C^*$-algebras, enriched categories, dg-categories, functor calculus, and Voevodsky's work on radditive functors.
\end{abstract}

\section{Introduction}

Left Bousfield localization is a fundamental tool in abstract homotopy theory. It is used for the study of homology localizations of spaces and spectra \cite{bousfield-localization-spaces-wrt-homology, bous79}, the existence of stable model structures for (classical, equivariant, and motivic) spectra \cite{hovey-spectra}, the towers used in chromatic homotopy theory \cite{rav84}, computations in equivariant homotopy theory \cite{gutierrez-white-equivariant}, computations in motivic homotopy theory \cite{grso}, the study of recollement \cite{gillespie-survey}, in homological algebra \cite{bauer}, representation theory \cite{hovey-cotorsion}, universal algebra \cite{white-yau}, graph theory \cite{deb}, Goodwillie calculus \cite{chorny-white, pereira}, the homotopy theory of homotopy theories \cite{bergner, Rezk}, and the theory of higher categories \cite{Reedy-paper}, among many other applications.

The left Bousfield localization of a model category $\calm$ relative to a class of morphisms $\calc$ is a model structure $\lcm$ on the category $\calm$, where the morphisms in $\calc$ are contained in the weak equivalences in $\lcm$, and the identity functor $Id:\calm \to \lcm$ satisfies the universal property that, for any model category $\cat{N}$, any left Quillen functor $F:\calm \to \cat{N}$, taking the morphisms in $\calc$ to weak equivalences in $\cat{N}$, factors through $\lcm$. Normally, to prove that $\lcm$ exists one requires $\calc$ to be a set, and $\calm$ to be left proper and cellular \cite{hirschhorn} or left proper and combinatorial \cite{barwickSemi, beke}. In this paper, we demonstrate that, even without left properness, $\lcm$ still exists as a semi-model category, and satisfies the universal property in the category of semi-model categories. This answers a question of Barwick \cite[Remark 4.13]{barwickSemi}, was also known to Cisinski (private correspondence), and has been already applied in \cite{Reedy-paper} and \cite{white-oberwolfach}. Our work closely parallels Barwick's approach \cite{barwickSemi}, and also builds on two previous papers that worked out special cases of left Bousfield localization without left properness in cellular settings (using very different techniques), namely \cite[Theorem 1.5.1]{goerss-hopkins} (homology localization in operad-algebras in spectra) and \cite[Theorem 5.14]{harper-zhang} ($TQ$-homology localization in operad algebras in spectra). Recently, an entirely different approach to localization without left properness has been discovered by Simon Henry \cite{henry}.

Semi-model categories were introduced in \cite{hovey-monoidal} and \cite{spitzweck-thesis} in the context of algebras over operads, and are reviewed in Definition \ref{defn:semi}. A semi-model category satisfies axioms similar to those of a model category, but one only knows that morphisms {\em with cofibrant domains} admit a factorization into a trivial cofibration followed by a fibration, and one only knows that trivial cofibrations {\em with cofibrant domains} lift against fibrations. Hence, on the subcategory of cofibrant objects, a semi-model category behaves exactly like a model category, and every semi-model category admits a functorial cofibrant replacement functor. Consequently, every result about model categories has a version for semi-model categories, usually obtained by cofibrantly replacing objects as needed. This includes the usual characterization of morphisms in the homotopy category, Quillen pairs, simplicial mapping spaces, hammock localization, path and cylinder objects, Ken Brown's lemma, the retract argument, the cube lemma, projective/injective/Reedy semi-model structures, latching and matching objects, cosimplicial and simplicial resolutions, computations of homotopy limits and colimits, and more. In practice, a semi-model structure is just as useful as a full model structure.

The main source of examples of semi-model categories arises from the theory of transferred (also known as left-induced) model structures. If $T$ is a monad, the transferred structure, on the category of $T$-algebras in a model category $\calm$, defines weak equivalences and fibrations to be created and reflected by the forgetful functor to $\calm$. When $T$ arises from an operad, this transferred structure is commonly a semi-model structure \cite{grso, fresse-book, white-yau}, but is not always a full model structure \cite[Example 2.9]{batanin-white-eilenberg-moore}.
Semi-model categories have been used to prove important results all over homotopy theory \cite{barwickSemi, florian, batanin-baez-dolan-via-semi, bdw1, bdw2, batanin-white-baez-dolan, batanin-white-eilenberg-moore, EKMM, fresse-book, goerss-hopkins, grso, gutierrez-white-equivariant, harper-zhang, hovey-monoidal, mandell, nuiten, ostvaer, spitzweck-thesis, white-thesis, white-localization, white-yau, white-yau-coloc, white-yau3, white-yau4, white-yau6, yau-book}.

In recent years, the authors have seen a large number of cases where one wishes to left Bousfield localize a model structure that is not known to be left proper \cite{bacard, bacard-published, tillmann1, beardsley, bergner, lazarev, goerss-hopkins, hackney, johnson, kk, pereira, richter, tabuada, toen, deb, voevodsky}. Our main result provides a way to do this, and even to left Bousfield localize semi-model structures. We now state our main result.

\begin{main} \label{main}
Suppose that $\M$ is a combinatorial semi-model category whose generating cofibrations have cofibrant domains, and $\cat C$ is a set of morphisms of $\M$. Then there is a semi-model structure $L_{\cat C}(\M)$ on $\M$, whose weak equivalences are the $\cat C$-local equivalences, whose cofibrations are the same as $\M$, and whose fibrant objects are the $\cat C$-local objects. Furthermore, $L_{\cat C}(\M)$ satisfies the universal property that, for any any left Quillen functor of semi-model categories $F:\M\to \cat{N}$ taking $\cat C$ into the weak equivalences of $N$, then $F$ is a left Quillen functor when viewed as $F:L_{\cat C}(\M)\to \cat{N}$.
\end{main}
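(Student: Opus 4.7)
The plan is to construct $L_{\cat C}(\M)$ by defining the candidate classes of cofibrations, weak equivalences, and trivial cofibrations explicitly, and then invoking the semi-model version of Jeff Smith's recognition theorem advertised in the introduction. The cofibrations are taken to be those of $\M$, generated by a set $I$ whose domains are cofibrant. The candidate weak equivalences $\cat W_{\cat C}$ are the $\cat C$-local equivalences, defined via derived mapping spaces $\Map^h_\M(-,W)$ into $\cat C$-local objects $W$; these mapping spaces exist in any semi-model category via cosimplicial and simplicial resolutions. For the generating set $J_{\cat C}$ of trivial cofibrations, I would take the generating trivial cofibrations $J$ of $\M$ together with (cofibrant replacements of) the pushout-product maps $Qf\boxprod(\partial\Delta^n\hookrightarrow\Delta^n)$ for $f\in \cat C$, where $Qf$ is a cofibrant replacement of $f$ in the arrow category. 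The hypothesis on the generating cofibrations of $\M$ is precisely what ensures that every map in $J_{\cat C}$ can be chosen to have cofibrant domain.

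Most of the hypotheses of the semi-model Smith theorem are then routine. The class $\cat W_{\cat C}$ visibly contains the weak equivalences of $\M$, satisfies two-out-of-three, and is closed under retracts. Accessibility of $\cat W_{\cat C}$ as a subcategory of $\Arr(\M)$ follows from combinatoriality of $\M$ via the standard argument that $\cat C$-local equivalences are detected by an accessible functor becoming an isomorphism. A direct adjunction calculation identifies the $L_{\cat C}(\M)$-fibrant objects (those with the right lifting property against $J_{\cat C}$) with the $\cat C$-local objects of $\M$, and shows that a map in $\rlp(I)$ lies in $\cat W_{\cat C}$ exactly when it is an $\M$-weak equivalence.

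The heart of the proof, and the step I expect to be the main obstacle, is the pushout stability axiom: a transfinite composition of pushouts of maps in $J_{\cat C}$ along maps out of cofibrant objects must be a $\cat C$-local equivalence. In the usual left proper combinatorial setting, this reduces to the fact that pushing out a trivial cofibration along a cofibration preserves weak equivalences. Without left properness I would replace this input by the following: because $I$ has cofibrant domains, any relative $J_{\cat C}$-cell complex starting at a cofibrant object stays cofibrant at every transfinite stage, so each pushout step becomes a pushout of a cofibration between cofibrant objects along a cofibration. Such pushouts are homotopy pushouts, by the cube-lemma argument available in any semi-model category, and therefore preserve $\cat C$-local equivalences. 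Combining this with the fact that the generators of $J_{\cat C}$ are themselves $\cat C$-local equivalences, and passing to transfinite colimits, yields the required closure property.

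Finally, the universal property follows by the standard adjunction argument. Given a left Quillen functor $F:\M\to\cat{N}$ of semi-model categories with right adjoint $G$, sending $\cat C$ to weak equivalences in $\cat N$, the adjunction isomorphism $\Map^h_\M(f,GN)\simeq\Map^h_{\cat N}(Ff,N)$ shows that $G$ sends fibrant objects of $\cat N$ to $\cat C$-local objects of $\M$. By Ken Brown's lemma for semi-model categories, $F$ therefore sends $\cat C$-local equivalences between cofibrant objects to weak equivalences, and since $F$ already preserves cofibrations, this makes $F$ a left Quillen functor $L_{\cat C}(\M)\to\cat{N}$.
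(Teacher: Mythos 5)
Your overall strategy---feed the class of $\cat C$-local equivalences and the generating cofibrations $I$ into a semi-model version of Smith's theorem, verify the pushout/transfinite-composition hypothesis by observing that a pushout of a cofibration between cofibrant objects along a map to a cofibrant object is a homotopy pushout, and deduce the universal property from the mapping-space adjunction $map(Ff,X)\simeq map(f,GX)$---is exactly the route the paper takes (Theorems \ref{thm:loc-exists} and \ref{thm:universal}). The one place you deviate is also the one place with a genuine gap: your explicit choice of $J_{\cat C}$ as $J$ together with horns $Qf\boxprod(\partial\Delta^n\hookrightarrow\Delta^n)$ on the maps of $\cat C$. First, a general combinatorial semi-model category is not simplicial, so these pushout-products do not literally exist without first passing to cosimplicial resolutions as in Hirschhorn's $\Lambda(\cat C)$. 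More seriously, Theorem \ref{main-smith} takes only $(\M,\sW,I)$ as input and manufactures $J$ itself from the accessibility of $\sW$ in $\Arr(\M)$ (Lemma \ref{lemma:construction-of-J} and Corollary \ref{cor:construction-of-J-cof-domains}); you cannot hand it your horn set. If instead you use the horns directly as generating trivial cofibrations, your adjunction computation does identify the objects with the right lifting property against $J_{\cat C}$ with the $\cat C$-local objects, but that only handles fibrant objects. For the semi-model axioms M4ii and M5ii you need $(\cof J_{\cat C})_c\supseteq(\sW\cap\cof I)_c$: every cofibration with cofibrant domain that is a $\cat C$-local equivalence must lift against every $J_{\cat C}$-injective map, equivalently must be a retract of a relative $J_{\cat C}$-cell complex. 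With horns as generators this is precisely the step that classically requires left properness (\cite[Proposition 13.2.1]{hirschhorn}) plus cellularity, and it is exactly the step the paper deliberately side-steps by letting the accessibility argument define $J_{\cat C}$, so that local fibrations are defined by lifting against a set already known to generate $(\sW\cap\cof I)_c$ (Lemma \ref{lemma:cof-J-characterization}).

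The repair is simply to drop the explicit horns: verify hypotheses (1)--(5) of Theorem \ref{thm:smith-thm-semi-output} for $\sW$ the $\cat C$-local equivalences---accessibility via Proposition \ref{prop:semi-combinatorial-facts} and Barwick's Lemmas 4.5--4.6, two-out-of-three and retracts via (co)simplicial resolutions, $\inj(I)\subseteq\sW$, your homotopy-pushout argument for pushouts to cofibrant objects, and transfinite compositions via the fact that $\kappa$-filtered colimits are homotopy colimits---and take $J_{\cat C}$ to be the set the theorem outputs, which automatically has cofibrant domains. Your universal-property argument is then the paper's, modulo one citation: deducing that $Fg$ is a weak equivalence from $map(Fg,X)\simeq map(g,UX)$ being an equivalence for all fibrant $X$ is not Ken Brown's lemma but the standard mapping-space detection criterion for maps between cofibrant objects; combined with $U$ preserving trivial fibrations (both classes are $\inj I$) and the characterization of semi-model Quillen pairs in Lemma \ref{lemma:right-Quillen}, it gives the conclusion.
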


Note that, if $\M$ is a model category, then $\M$ is automatically a semi-model category, and so the theorem above proves that left Bousfield localization $L_{\cat C}\M$ exists (as a semi-model category) for non-left proper model categories $\M$.

Our main tool to prove Theorem \ref{main} is a semi-model categorical version of a famous theorem of Jeff Smith (\cite[Proposition 2.2]{barwickSemi}, \cite[Theorem 1.7]{beke}). We apply this theorem by taking $\sW$ to be the class of $\cat C$-local equivalences.

\begin{main} \label{main-smith}
Suppose $\M$ is a locally presentable category with a class $\sW$ of weak equivalences and a set of morphisms $I$ satisfying
\begin{enumerate}
\item The class of weak equivalences with cofibrant domains is $\kappa$-accessible.
\item The class $\sW$ is closed under retracts and the two out of three property.
\item Any morphism in inj$(I)$ is a weak equivalence.
\item Within the class of trivial cofibrations, defined to be the intersection of $\cof I$ and $\sW$, morphisms with cofibrant domains are closed under pushouts to arbitrary cofibrant objects and under transfinite composition.
\item The morphisms of $I$ have cofibrant domains.
\end{enumerate}
Then there is a cofibrantly generated semi-model structure on $\M$ with generating cofibrations $I$, generating trivial cofibrations $J$, cofibrations $\cof I$, and fibrations defined by the right lifting property with respect to $J$. Furthermore, the generating trivial cofibrations $J$ have cofibrant domains.
\end{main}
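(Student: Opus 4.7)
The plan is to adapt Jeff Smith's classical recognition theorem, as presented in \cite{beke} and \cite{barwickSemi}, to the semi-model category setting. The first step is to apply the small object argument with $I$, which is available because $\M$ is locally presentable. This produces the (cofibration, trivial fibration) factorization of every morphism, where the right factor lies in $\inj(I) \subseteq \sW$ by hypothesis (3). Hypothesis (5), together with closure of cofibrant objects under pushouts of $I$-maps (whose domains are cofibrant), ensures that cofibrant replacement starting from the initial object yields genuinely cofibrant objects. Retract closure of cofibrations and fibrations is automatic from their characterization by lifting properties.

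The heart of the proof is the construction of $J$ together with the following recognition lemma: every trivial cofibration with cofibrant domain lies in $\cof(J)$, up to retract. Choose a regular cardinal $\lambda \geq \kappa$ such that the (co)domains of $I$ are $\lambda$-presentable, the class $\sW$ is closed under $\lambda$-filtered colimits in the arrow category (valid by $\kappa$-accessibility of $\sW$), and the cofibrant objects are $\lambda$-accessibly embedded. Let $J$ be a set of representatives, up to isomorphism, of trivial cofibrations $j \colon A \to B$ between $\lambda$-presentable objects with $A$ cofibrant; then $B$ is automatically cofibrant as an $I$-cell complex over a cofibrant object. To prove the recognition lemma, given $f \colon X \to Y$ a trivial cofibration with $X$ cofibrant, apply the small object argument with $J$ to factor $f = q \circ i$ with $i$ a relative $J$-cell complex and $q \in \rlp(J)$. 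By hypothesis (4), $i$ is a trivial cofibration with cofibrant intermediate object, and by 2-out-of-3, $q$ is a weak equivalence. An accessibility argument adapted from Smith's classical proof, using hypothesis (4) to maintain cofibrancy throughout, then shows $q \in \inj(I)$: any $I$-lifting problem against $q$ restricts to a $\lambda$-presentable subdiagram that corresponds to a lifting problem against some $j \in J$, which $q$ solves by definition. Then $f \in \cof(I)$ lifts against $q \in \inj(I)$, and the retract argument places $f$ as a retract of the $J$-cell complex $i$, hence $f \in \cof(J)$.

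With the recognition lemma established, the remaining semi-model axioms follow. The (trivial cofibration, fibration) factorization for morphisms with cofibrant domain is exactly the $f = q \circ i$ factorization above, whose $J$-cell factor is a trivial cofibration by hypothesis (4) and whose second factor is a fibration by definition. Lifting of cofibrations in $\cof(I)$ against trivial fibrations in $\inj(I)$ is tautological, and lifting of trivial cofibrations with cofibrant domain against fibrations in $\rlp(J)$ follows directly from the recognition lemma. By construction, the generating trivial cofibrations $J$ have cofibrant domain. The main obstacle is establishing $q \in \inj(I)$ in the recognition lemma: classically, Smith's proof exploits closure of trivial cofibrations under arbitrary pushouts and transfinite compositions, whereas here only the restricted closure of hypothesis (4), for cofibrant-domain maps pushed to cofibrant objects, is available. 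Careful tracking of cofibrancy through the small object argument and the accessibility-based reduction of $I$-lifting problems to $J$-lifting problems is the principal technical subtlety.
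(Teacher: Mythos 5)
The overall architecture you propose (define $J$, prove a recognition lemma identifying $(\sW\cap\cof I)$ with cofibrant domain with $\cof(J)$ up to retract, then read off the semi-model axioms) is sound, and your route differs from the paper's: the paper constructs $J$ by a solution-set argument (for each $i\in I$, accessibility of $\sW$ in $\Arr(\M)$ gives a set $\sW(i)$ through which every square from $i$ into $\sW$ factors, and each such square is then converted into a trivial cofibration with cofibrant domain by explicit factorizations), whereas you take $J$ to be \emph{all} trivial cofibrations between $\lambda$-presentable objects with cofibrant domain. That choice can be made to work, but the pivotal step of your recognition lemma is exactly where the content lies, and as written it does not go through. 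You claim that an $I$-lifting problem against $q\in\rlp(J)\cap\sW$ ``restricts to a $\lambda$-presentable subdiagram that corresponds to a lifting problem against some $j\in J$.'' What accessibility of $\sW$ actually gives is a factorization of the square through a \emph{weak equivalence} $P\to Q$ between $\lambda$-presentable objects; such a map is in general neither a cofibration nor has cofibrant domain, so it is not a member of your $J$, and the hypothesis $q\in\rlp(J)$ cannot be applied to it. The whole difficulty of Smith's theorem (and of the paper's Lemma 3.4 and Corollary 3.5) is precisely the conversion of this $\lambda$-presentable weak equivalence into a genuine trivial cofibration with cofibrant domain factoring the same square: one pushes out along $i$, factors into $\cell I$ followed by $\inj(I)$, uses hypothesis (3) and two-out-of-three to see the cofibration factor is a weak equivalence, and cofibrantly replaces the domain. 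None of this is in your sketch, and for your particular $J$ there is an additional size constraint you never address: these factorizations must land back in $\lambda$-presentable objects, which requires choosing $\lambda$ so that the $I$-small-object factorization (and cofibrant replacement) preserves $\lambda$-presentability — true for arbitrarily large $\lambda$ by uniformization of accessible functors, but it must be arranged and is not implied by the conditions you list on $\lambda$.

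Once that square-factorization property of $J$ is actually established, the rest of your argument is fine and in fact slightly cleaner than the paper's: given the property, $\rlp(J)\cap\sW\subset\inj(I)$ follows immediately (factor the square through $j\in J$, lift $j$ against $q$), so the plain small object argument for $J$ suffices and the paper's tailored $Q$-construction can be avoided; your use of hypothesis (4) to keep the $J$-cell factor a trivial cofibration (writing pushouts of coproducts as transfinite composites of pushouts of single maps between cofibrant objects) matches the paper's treatment of M5ii. Two small further points: your parenthetical that the codomain of $j$ is cofibrant ``as an $I$-cell complex over a cofibrant object'' should instead invoke closure of $\cof I$ under composition (a map in $\cof I$ need not be a cell complex), and in the recognition lemma you should note that the relevant $q$ also needs two-out-of-three together with hypothesis (3) only after knowing $i$ is a weak equivalence, which in turn silently uses that coproducts of maps in $J$ indexed by arbitrary sets remain trivial cofibrations — again supplied by hypothesis (4) via the transfinite-composition trick, and worth saying explicitly.
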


After a review of the main definitions in Section \ref{sec:preliminaries}, we prove Theorem \ref{main-smith} in Section \ref{sec:smith}. We then prove Theorem \ref{main} in Section \ref{sec:bousfield}. As the main value of our approach is that we do not need $\M$ to be left proper in order for its localization to exist, we now explain the key idea that allows this assumption to be avoided. A model category is left proper if any pushout of a weak equivalence $f: A\to B$ along a cofibration $g: A\to C$ is a weak equivalence $h: C\to P$. The semi-model category version of this statement assumes that $f$ is a weak equivalence \textit{between cofibrant objects}. With this extra assumption, $h$ is always a weak equivalence, so left properness is automatic. 

The main place where left properness is used when proving the existence of left Bousfield localization, is to prove that pushouts of trivial cofibrations are again trivial cofibrations (see Chapter 3 of \cite{hirschhorn}, and note that left properness is not required till Proposition 3.2.10). Crucially, in a left proper model category, a pushout square where one leg is a cofibration is a homotopy pushout square \cite[Proposition 1.19]{barwickSemi}. Thankfully, when we establish a semi-model structure $L_{\cat C}\M$, we only need this for a pushout square where all objects are cofibrant, and one leg is a trivial cofibration. Such squares are always homotopy pushout squares, even when $L_{\cat C}\M$ is only a semi-model structure.

The other main place where left properness is needed in the theory of left Bousfield localization is Proposition 13.2.1 in \cite{hirschhorn}, which states that for any cofibration $g: A\to B$, any fibration $p: X\to Y$, and any cofibrant replacement $Qg: QA\to QB$ (which is a cofibration) as shown below:
\[
\xymatrix{QA \ar@{^(->}[d] \ar[r] & A \ar@{^(->}[d] \ar[r] & X \ar@{->>}[d]  \\  
QB \ar[r] & B \ar[r] & Y}
\]
then $p$ has the right lifting property with respect to $g$, if $p$ has the right lifting property with respect to $Qg$ and if $\M$ is left proper. This is used when characterizing the fibrant objects of a left Bousfield localization, and when verifying the universal property of left Bousfield localization, because of the way Hirschhorn defines his set of generating trivial cofibrations \cite[Definition 4.2.2]{hirschhorn}. The semi-model category version of \cite[Proposition 13.2.1]{hirschhorn} assumes $g$ is already a cofibration between cofibrant objects, and hence holds in any semi-model category, or in any model category (even one that fails to be left proper). In our case, we side-step this result entirely, because for us, the domains of the generating (trivial) cofibrations in $L_{\cat C} \M$ are cofibrant, and the local fibrations are \textit{defined} to be morphisms with the right lifting property with respect to the generating trivial cofibrations $J_{\cat C}$ provided by Theorem \ref{main-smith}. 

After proving Theorem \ref{main} in Section \ref{sec:bousfield}, in Section \ref{sec:applications}, we consider several applications of Theorem \ref{main} and  propose future directions.

We open Section \ref{sec:applications} by recalling  Voevodsky's theory of radditive functors \cite{voevodsky}. Voevodsky constructs an example \cite[Example 3.48]{voevodsky}  of a non left proper  category of radditive functors and proves that it does not admit a left Bousfield localization as a model category. This example clearly shows that our theory of semi-model localization is a powerful new tool which allows us to overcome many technical difficulties arising from the non-existence of model theoretical localization. 

We then continue in Section \ref{sec:applications} with sample applications from different areas of homotopy theory.  We briefly describe the results from our companion paper \cite{Reedy-paper}, where we prove a strong version of the Baez-Dolan stabilization hypothesis \cite{baez-dolan} for Rezk's model of weak $n$-categories \cite{Rezk}. Other applications of our result include  the resolution model structure in chromatic homotopy theory \cite{goerss-hopkins}, $TQ$-homology \cite{harper-zhang}, Ravenel's $X(n)$-spectra \cite{beardsley}, parameterized spectra after Intermont and Johnson \cite{johnson}, $C^*$-algebras \cite{kk,ostvaer}, chain complexes \cite{richter}, inverting operations in ring theory \cite{lazarev} and operad theory \cite{hackney}, the theory of weakly enriched categories \cite{bacard-published}, dg-categories \cite{toen}, Goodwillie calculus \cite{pereira}, graph theory \cite{deb} and the theory of homotopy colimits of diagrams of model categories \cite{bergner}. We anticipate many more applications of Theorem \ref{main} in the years to come.

\section*{Acknowledgments}

The authors would like to thank Denis-Charles Cisinski for suggesting this problem, and Clark Barwick for leaving such a nice road-map to its resolution. The second author is grateful to Mark Johnson for suggesting Example \ref{ex:intermont}, to Simon Henry for sharing an advance version of his work, and to Valery Isaev and Truong Hoang Manh for a careful reading of the first arXiv version. We would also like to thank Macquarie University for hosting the second author on three occasions while we carried out this research, and we thank the anonymous referee for comments that improved the exposition.

\section{Preliminaries} \label{sec:preliminaries}

In this section, we recall definitions and useful results about semi-model categories, and about left Bousfield localization. For further details on these topics, we refer the reader to \cite{barwickSemi, fresse-book, goerss-hopkins, hovey-monoidal, spitzweck-thesis, white-localization, white-yau} and to \cite{hirschhorn}. We assume the reader is familiar with the basics of model categories, as recounted in \cite{hovey-book}. We begin with the definition of a semi-model category \cite{barwickSemi}. Recall that, for a set of morphisms $S$, $\inj S$ refers to the class of morphisms having the right lifting property with respect to $S$.

\begin{defn} \label{defn:semi}
A \textit{semi-model structure} on a category $\M$ consists of classes of weak equivalences $\sW$, fibrations $\sF$, and cofibrations $\sQ$ satisfying the following axioms:

\begin{enumerate}
\item[M1] Fibrations are closed under pullback.
\item[M2] The class $\sW$ is closed under the two out of three property.
\item[M3] $\sW,\sF,\sQ$ contain the isomorphisms, are closed under composition, and are closed under retracts.
\item[M4] 
\begin{enumerate}
\item[i] Cofibrations have the left lifting property with respect to trivial fibrations.
\item[ii] Trivial cofibrations whose domain is cofibrant have the left lifting property with respect to fibrations.
\end{enumerate}
\item[M5] 
\begin{enumerate}
\item[i] Every morphism in $\M$ can be functorially factored into a cofibration followed by a trivial fibration. 
\item[ii] Every morphism whose domain is cofibrant can be functorially factored into a trivial cofibration followed by a fibration.
\end{enumerate} 
\end{enumerate}

If, in addition, $\M$ is bicomplete, then we call $\M$ a \textit{semi-model category}.
$\M$ is said to be \textit{cofibrantly generated} if there are sets of morphisms $I$ and $J$ in $\M$ such that $\inj I$ is the class of trivial fibrations, $\inj J$ is the class of fibrations in $\M$, the domains of $I$ are small relative to $I$-cell, and the domains of $J$ are small relative to morphisms in $J$-cell whose domain is cofibrant. We will say $\M$ is \textit{combinatorial} if it is cofibrantly generated and locally presentable.
\end{defn}

Our definition of semi-model category follows Barwick \cite{barwickSemi} (taking $E = C$ there), which was inspired by Spitzweck's notion of a $J$-semi model category \cite{spitzweck-thesis}, but removing the need for this abstract structure to be transferred from some underlying model category. Many of the semi-model categories $\M$ that we have in mind are in fact transferred along a right adjoint $U: \M \to \cat D$, so that weak equivalences and fibrations in $\M$ are morphisms $f$ such that $U(f)$ is a weak equivalence or fibration in $\cat D$. But the definition allows for semi-model categories to exist without reference to a model category $\cat D$, and Barwick showed how to recover Spitzweck's results in this more general setting \cite{barwickSemi}. Although Barwick and Spitzweck originally included an axiom that the initial object is cofibrant, this axiom is redundant. The statement can easily be deduced from a factorization, lifting, and retract argument applied to the identity morphism on the initial object.

We note that Spitzweck originally assumed in (M1) that trivial fibrations are also closed under pullback. Barwick proved that this is redundant, since trivial fibrations are characterized as morphisms having the right lifting property with respect to cofibrations \cite[Lemma 1.7]{barwickSemi} and hence are closed under pullback and composition. For a cofibrantly generated semi-model category, (M1) is entirely redundant, since fibrations are characterized as $\inj J$. If the (co)domains of morphisms in $J$ (resp. $I$) are compact, these observations show that (trivial) fibrations are closed under transfinite composition. Throughout this paper, we work with cofibrantly generated semi-model categories, so we say nothing more about (M1). For cofibrantly generated semi-model categories, the first two parts of M3 are automatic.

We note that the assumptions we require of a semi-model category are stricter than those required by Fresse \cite{fresse-book}, who generalized Spitzweck's notion of an $(I,J)$-semi model structure, and hence all results proven by Fresse hold in our setting. We gather a few useful results, the proofs of which are useful exercises (which may also be found in \cite{barwickSemi, fresse-book, spitzweck-thesis}):

\begin{lemma} \label{lemma:transfinite-comp-and-pushout}
Let $\M$ be a cofibrantly generated semi-model category. Then:

\begin{enumerate}
\item Cofibrations are closed under pushout and transfinite composition.
\item A relative $J$-cell complex with cofibrant domain is a trivial cofibration.
\item Trivial cofibrations with cofibrant domains are retracts of relative $J$-cell complexes.
\end{enumerate}
\end{lemma}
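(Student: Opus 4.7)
The plan is to prove the three parts in sequence, each via a retract argument combined with the appropriate factorization and lifting axiom of a cofibrantly generated semi-model category.

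For (1), since $\M$ is cofibrantly generated, the small object argument (available by the smallness hypothesis on domains of $I$) identifies $\cof I$ with the class of maps possessing the left lifting property against $\inj I$. Any class defined by a left lifting property is automatically closed under pushout and transfinite composition, so (1) is immediate.

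For (2), let $f : A \to B$ be a relative $J$-cell complex with $A$ cofibrant. Every map in $J$ is a trivial cofibration and hence lies in $\cof I$, so part (1) already makes $f$ a cofibration, and it remains to show $f \in \sW$. I would apply M5(ii), which is available because $A$ is cofibrant, to factor $f = pg$ with $g : A \to X$ a trivial cofibration and $p : X \to B$ a fibration. By its construction $f$ has the left lifting property against $\inj J = $ fibrations (the class $\inj(\inj J)$ contains $J$ and is closed under pushout and transfinite composition), producing a diagonal $h : B \to X$ with $hf = g$ and $ph = \mathrm{id}_B$. This realizes $f$ as a retract of $g$ in the arrow category, so $f \in \sW$ by closure under retracts (M3).

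For (3), let $f : A \to B$ be a trivial cofibration with $A$ cofibrant. I would apply the small object argument to $J$ starting from the cofibrant object $A$, which is legitimate thanks to the smallness condition on domains of $J$ in the definition, obtaining a factorization $f = pg$ with $g : A \to X$ a relative $J$-cell complex and $p : X \to B$ in $\inj J = $ fibrations. Part (2) gives that $g$ is a trivial cofibration; by 2-out-of-3, $p$ is then a trivial fibration. Since $f$ is a cofibration, M4(i) furnishes a lift $h : B \to X$ with $ph = \mathrm{id}_B$ and $hf = g$, and the usual retract argument exhibits $f$ as a retract of $g$.

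The main obstacle to watch for is a circularity worry in (2): one cannot invoke M4(ii) on $f$ itself, since that would already presuppose the conclusion. The resolution is that $f$, being a literal transfinite composition of pushouts of maps in $J$, enjoys the lifting property against fibrations \emph{by construction}, not by any axiom applied to $f$. The cofibrancy of the domain $A$ is used exactly once, to activate M5(ii); everything else is retract calculus.
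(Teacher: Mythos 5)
Your proof is correct and is essentially the standard argument that the paper leaves as an exercise (deferring to Barwick, Spitzweck, and Fresse): closure of an LLP-defined class for (1), and the retract argument against the factorizations supplied by M5(ii) and the small object argument for (2) and (3), with cofibrancy of the domain used exactly where you say it is. Two small glosses worth tightening: in (1) the identification of the cofibrations with $\cof I$ is not just the definition of $\cof I$ but uses M4(i), M5(i) and retract closure (equivalently Barwick's Lemma 1.7 together with $\inj I=$ trivial fibrations); and in (2) the paper's definition of a cofibrantly generated semi-model category does not assert that the maps of $J$ are trivial cofibrations --- fortunately your argument only needs $J\subset \cof J\subset \cof I$ (which holds since $\inj I\subset \inj J$), and the weak-equivalence part of (2) is then obtained, as you do, from the lifting property of the $J$-cell complex itself rather than from any property of the individual maps of $J$.
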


Cofibrantly generated semi-model categories admit framings, and hence homotopy function complexes $map(-,-) \in sSet$, that we will use when we Bousfield localize. However, it is also possible to define a \textit{simplicial} semi-model structure (see \cite[Definition 1.1.8]{goerss-hopkins}) on a simplicial category (i.e., enriched, tensored, and cotensored over $sSet$), where one can use simplicial mapping spaces $Map(-,-)$ as homotopy function complexes. We will not use this notion, but we include a definition for the convenience of future readers.

\begin{definition}
Suppose $\M$ is a simplicial category. A semi-model structure on $\M$ is \textit{simplicial} if, whenever $i: A\to B$ is a cofibration with cofibrant domain, and $q: X\to Y$ is a fibration, then the induced map $Map(B,X)\to Map(B,Y)\times_{Map(A,Y)} Map(A,X)$ is a fibration in $sSet$ that is furthermore a weak equivalence if either $i$ or $q$ are.
\end{definition}

For simplicial semi-model categories, it is a bit easier to Bousfield localize, in practice.

\section{Smith's theorem for locally presentable semi-model categories} \label{sec:smith}

In this section, we prove a version of Smith's theorem \cite{barwickSemi, beke}, that provides a set $J$ of generating trivial cofibrations to produce a semi-model structure on a locally presentable category with a given class of weak equivalences, and a given set of generating cofibrations, satisfying some compatibility axioms. This result is our main tool for proving Theorem \ref{main}. In the following, for a class of morphisms $S$, $\cof S$ means morphisms with the left lifting property with respect to $\inj S$. We say an object $X$ is cofibrant if $\emptyset\to X$ is in $\cof I$. We let $S_c$ denote the class of morphisms in $S$ that have cofibrant domains. We refer the reader to \cite{adamek} for terminology related to accessibility.

\begin{theorem} \label{thm:smith-thm-semi-output}
Suppose $\M$ is a locally presentable category with a class $\sW$ of weak equivalences and a set of morphisms $I$ satisfying:
\begin{enumerate}
\item the class $\sW_c$ is $\kappa$-accessible,
\item the class $\sW$ is closed under retracts and satisfies the two out of three property,
\item any morphism in inj$(I)$ is a weak equivalence,
\item within the class of trivial cofibrations, defined to be the intersection of $\cof I$ and $\sW$, morphisms with cofibrant domains are closed under pushouts to arbitrary cofibrant objects and under transfinite composition, and 
\item the morphisms of $I$ have cofibrant domains.
\end{enumerate}
Then there is a combinatorial semi-model structure on $\M$ with generating cofibrations $I$, generating trivial cofibrations $J$, cofibrations $\cof I$, and fibrations defined by the right lifting property with respect to $J$. Furthermore, the generating trivial cofibrations $J$ have cofibrant domains.
\end{theorem}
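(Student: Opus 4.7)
The plan is to follow the classical Smith argument as presented in Beke \cite{beke} and Barwick \cite{barwickSemi}, modifying it to respect the cofibrant-domain bookkeeping that the semi-model setting demands. All of the classical ingredients — the accessibility-based solution-set argument, the small object argument, and the retract argument — should survive, with hypothesis (4) playing the role that left properness plays in the classical proof, and hypothesis (5) ensuring that the required cofibrant domains are available throughout.

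First I would use the accessibility of $\sW$ together with the local presentability of $\M$ to construct $J$. Pick a regular cardinal $\lambda \geq \kappa$ so large that $\sW$ is $\lambda$-accessibly embedded in the arrow category $\M^{[1]}$, every domain and codomain of a map in $I$ is $\lambda$-presentable, and the weak factorization system cofibrantly generated by $I$ is generated by a $\lambda$-small subset. Let $J$ be a set of representatives for the isomorphism classes of morphisms $f : A \to B$ lying in $(\cof I \cap \sW)_c$ with both $A$ and $B$ $\lambda$-presentable. This collection is small by local presentability, and by construction every element of $J$ has cofibrant domain; nontrivial examples exist because hypothesis (5) together with Lemma \ref{lemma:transfinite-comp-and-pushout} guarantees that iterated $I$-cell attachments to the initial object are cofibrant.

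The heart of the argument is then to prove $\inj J \cap \sW = \inj I$. The inclusion $\supseteq$ is immediate from hypothesis (3) combined with $J \subseteq \cof I$. For the reverse inclusion, given $f \in \inj J \cap \sW$, factor $f = p \circ i$ by the small object argument applied to $I$, so $i$ is a relative $I$-cell complex and $p \in \inj I$. Hypothesis (3) gives $p \in \sW$, and two-out-of-three gives $i \in \sW$, so $i \in \cof I \cap \sW$. A Smith-style accessibility argument — presenting $i$ as a $\lambda$-filtered colimit of its $\lambda$-presentable subobjects lying in $\cof I \cap \sW$, and using hypothesis (4) to witness the corresponding transfinite composite of pushouts of $J$-maps as a trivial cofibration — exhibits $i$ as a retract of a relative $J$-cell complex. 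Therefore $i$ lifts against $f \in \inj J$, and the retract argument gives $f \in \inj I$.

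Finally, I would verify the semi-model axioms. M1 and M2 are among the hypotheses. M3 follows from closure under retracts of $\sW$ (assumed) and of the lifting-defined classes $\cof I$ and $\inj J$. M4(i) is the definition of $\cof I$, once we identify trivial fibrations with $\inj I$ via the identity above. M4(ii) follows because any trivial cofibration with cofibrant domain lies in $(\cof I \cap \sW)_c$ and is, by the same Smith argument, a retract of a relative $J$-cell complex, hence lifts against any fibration in $\inj J$. M5(i) is the small object argument applied to $I$. M5(ii) is the small object argument applied to $J$ to a map with cofibrant domain: since maps of $J$ have cofibrant domain and the source is cofibrant, hypothesis (4) keeps the cell factor inside $(\cof I \cap \sW)_c$ at each stage and under the transfinite composition. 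The main obstacle I anticipate is the solution-set step itself — arranging that the $\lambda$-presentable approximations produced by accessibility can be chosen inside $(\cof I \cap \sW)_c$ rather than merely inside $\cof I \cap \sW$, and that the resulting combinatorial closure under Smith's construction still yields a generating set $J$. This is the one place where the cofibrant-domain condition interacts nontrivially with Smith's original argument, and where hypotheses (4) and (5) must be used in concert.
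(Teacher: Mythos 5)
Your overall architecture (accessibility to produce a set $J$, a Smith-style identification of $\inj J \cap \sW$ with $\inj I$, and then the semi-model axioms with hypotheses (4) and (5) standing in for left properness) is reasonable, but there is a genuine gap exactly at the step you yourself flag. You take $J$ to be all trivial cofibrations with cofibrant, $\lambda$-presentable domain and codomain, and then assert that any $i \in \cof I \cap \sW$ arising as the cell factor of some $f \in \inj J \cap \sW$ is a retract of a relative $J$-cell complex because it can be presented as a $\lambda$-filtered colimit of $\lambda$-presentable subobjects lying in $\cof I \cap \sW$. That inference is the entire content of Smith's theorem and does not follow from such a presentation: a $\lambda$-filtered colimit of arrows in $J$, taken in the arrow category, is not a cellular presentation, and converting it into a transfinite composition of pushouts would require pushing out ``relative'' comparison maps (of the form $Z_d \cup_{X_d} X_{d'} \to Z_{d'}$ and the like) which are neither members of $J$ nor known to be trivial cofibrations; hypothesis (4) cannot certify them, since it only asserts closure for maps already known to be trivial cofibrations with cofibrant domain. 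Worse, in the semi-model setting your reduction of $\inj J \cap \sW \subseteq \inj I$ applies this cellularization to the cell factor of an arbitrary $f \in \inj J \cap \sW$, whose domain need not be cofibrant, so hypothesis (4) is unavailable there even in principle. (Also, quoting accessibility of $\cof I \cap \sW$ as a subcategory of $\Arr(\M)$ itself needs an argument: accessibility of $\cof I$ is nontrivial, and the class is not weakly saturated in this setting.)

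The paper resolves precisely this difficulty by a different device: $J$ is not ``all small trivial cofibrations'' but is produced by a solution-set argument (Beke--Barwick style, Lemma \ref{lemma:construction-of-J}) so that every commutative square from a generating cofibration $i \in I$ to an arbitrary $w \in \sW$ factors through a member of $J$. This square-factorization property yields $\inj J \cap \sW \subseteq \inj I$ by a direct lifting argument with no cofibrancy hypotheses, and, via the $Q^{\kappa}$ small-object construction together with hypothesis (4), yields $(\cof J)_c = (\sW \cap \cof I)_c$ (Lemma \ref{lemma:cof-J-characterization}), which is what M4(ii) and M5(ii) actually require; a further factorization step (Corollary \ref{cor:construction-of-J-cof-domains}) arranges cofibrant domains. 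If you want to keep your choice of $J$, the viable route is not the filtered-colimit argument but to show that for $\lambda$ large enough the solution-set construction lands among maps between $\lambda$-presentable objects, so that your $J$ contains a subset with the square-factorization property --- at which point you are reproving the paper's lemmas rather than bypassing them.
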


We will now prove this, following \cite[Lemma 2.3 and 2.4]{barwickSemi} (equivalently, \cite[Lemma 1.8 and 1.9]{beke}), which we restate for semi-model categories below. Let $(\sW \cap \cof I)_c$ denote the subclass of $\sW \cap \cof I$ consisting of morphisms with cofibrant domains (hence cofibrant codomains as well). Let $(\cell J)_c$ denote the collection of transfinite compositions of pushouts of morphisms of $J$ along morphisms into cofibrant objects.

\begin{lemma} \label{lemma:cof-J-characterization}
Under the hypotheses of Theorem \ref{thm:smith-thm-semi-output}, suppose $J\subset (\sW\cap\cof I)_c$ is a set of morphisms such that any commutative square
\begin{equation*}
\xymatrix@C=18pt@R=18pt{
K\ar[d]_i \ar[r]&M\ar[d]^w\\
L\ar[r]&N
}
\end{equation*}
in which $i \in I$ and $w$ is in $\sW_c$, can be factored as a commutative diagram
\begin{equation*}
\xymatrix@C=18pt@R=18pt{
K\ar[d]\ar[r]&M'\ar[d]\ar[r]&M\ar[d]\\
L\ar[r]&N'\ar[r]&N,
}
\end{equation*}
in which $M'\to N'$ is in $J$. Then $(\cof J)_c = (\sW \cap \cof I)_c$.
\end{lemma}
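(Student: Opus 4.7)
The plan is to prove the two inclusions $(\cof J)_c \subseteq (\sW \cap \cof I)_c$ and $(\sW \cap \cof I)_c \subseteq (\cof J)_c$ separately, with the small object argument applied to $J$ as the main tool in both directions.

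For the forward inclusion, I take $f \colon A \to B$ in $(\cof J)_c$, so $A$ is cofibrant, and factor $f = p \circ g$ via the small object argument with $g \colon A \to B''$ a relative $J$-cell complex and $p \in \inj J$. Since $J \subseteq (\sW \cap \cof I)_c$, each map of $J$ has cofibrant domain, and the cell complex is assembled by transfinite composition of pushouts of such maps along morphisms into cofibrant objects; cofibrancy of the intermediate stages is preserved because $A$ is cofibrant, the initial object is cofibrant (hypothesis (5)), and $\cof I$ is closed under composition and pushout. Hypothesis (4) then guarantees $g \in (\sW \cap \cof I)_c$. The retract argument, applied to $f \in \cof J$ and $p \in \inj J$, exhibits $f$ as a retract of $g$, and retract-closure of $\sW$ (hypothesis (2)) together with that of $\cof I$ completes this direction.

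For the reverse, more substantive inclusion, I let $f \in (\sW \cap \cof I)_c$ and factor $f = p \circ g$ in the same way. By the forward inclusion, $g \in \sW$, and two-out-of-three then forces $p \in \sW$. The key step is to show $p \in \inj I$: given any lifting problem with some $i \in I$ on the left and $p$ on the right, the factorization hypothesis on $J$ (applied precisely to the $I$-map $i$ and the $\sW$-map $p$) produces an intermediate map in $J$ against which $p$ lifts, since $p \in \inj J$; composing the resulting lift with the vertical factorization map supplies the required diagonal. Once $p \in \inj I$ is established, $f \in \cof I$ yields a lift in the square with $f$ on the left and $p$ on the right, realizing $f$ as a retract of $g \in \cof J$ and hence placing $f$ in $(\cof J)_c$.

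The main obstacle I anticipate is precisely the deduction $p \in \inj I$ in the second direction: this is exactly where the factorization property imposed on $J$ is essential, playing the role that left properness plays in the classical Smith argument. A secondary bookkeeping point in the semi-model setting is ensuring cofibrancy is preserved throughout the cell-complex construction, so that hypothesis (4) can be invoked at each stage; this is handled by hypothesis (5) together with closure of cofibrations under composition and pushout.
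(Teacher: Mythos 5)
Your proof is correct, and the easy inclusion $(\cof J)_c \subseteq (\sW\cap\cof I)_c$ is argued exactly as in the paper (ordinary small object argument for $J$, cofibrancy of the intermediate stages, hypothesis (4), retract argument, retract-closure of $\sW$ and $\cof I$). For the harder inclusion $(\sW\cap\cof I)_c \subseteq (\cof J)_c$ your mechanism is genuinely different from the paper's. The paper follows Barwick/Beke: it builds a tailored transfinite construction $Q^{\kappa}$ on the slice $(\sW/Y)$, attaching at each stage only the chosen $J$-factorizations of squares from $I$ into the given map, and uses $\kappa$-presentability of the (co)domains of $I$ to conclude that the right factor of the resulting factorization is in $\inj I$; the retract argument then finishes. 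You instead run the plain small object argument for $J$ and observe that the resulting right factor $p$, being in $\inj J$ and (by two-out-of-three, once the left factor is seen to be a trivial cofibration via the same cofibrancy bookkeeping the paper needs) in $\sW$, is automatically in $\inj I$: an $I$-square against $p$ factors through some $j\in J$ by the hypothesis on $J$, the lift of $j$ against $p$ exists since $p\in\inj J$, and composing gives the diagonal. This post hoc upgrade of $\sW\cap\inj J$ to $\inj I$ replaces the paper's cardinality bookkeeping and slice-category endofunctor with a short lifting computation, and is arguably cleaner; what the paper's route buys is the explicit functorial factorization $X\to Q^{\kappa}f\to Y$ with $I$-injective right factor, which it reuses later when verifying axiom M5ii (though your factorization would serve there as well). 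Both arguments rest on the same two pillars, namely hypothesis (4) applied to $J$-cell complexes with cofibrant stages and the factorization hypothesis on $J$, so the difference is in where the latter is deployed: inside the construction (paper) versus after it (you).
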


\begin{proof}
To show $(\cof J)_c \supset (\sW \cap \cof I)_c$, let $f \in (\sW \cap \cof I)_c$, and recall that this means $f$ has cofibrant domain. We will factor $f$ as an element $i$ of $(\cell J)_c$ followed by an element $p$ of $\inj I$. Once we do this, $f$ has the left lifting property with respect to $p$ and the retract argument says $f$ is a retract of $i$. Lemma 2.1.10 in \cite{hovey-book} demonstrates that $i$ is in $\cof J$. As $\cof J$ is defined by lifting, it is closed under retract, so this proves $f$ is in $\cof J$. Since $f$ was assumed to be a morphism between cofibrant objects, $f$ is in fact in $(\cof J)_c$.

To produce the factorization for $f$ we follow \cite{barwickSemi}. Choose a regular cardinal $\kappa$ such that the codomains of morphisms in $I$ are $\kappa$-presentable. Consider the set $(I/f)$ of squares
\begin{equation*}
\xymatrix@C=18pt@R=18pt{K\ar[d]_{i}\ar[r]&X\ar[d]^f\\
L\ar[r]&Y,}
\end{equation*}
where $i\in I$; for each such square use the hypothesis to produce a factorization
\begin{equation*}
\xymatrix@C=18pt@R=18pt{
K\ar[d]_{i}\ar[r]&M(i)\ar[d]^{j_{(i,f)}}\ar[r]&X\ar[d]^f\\
L\ar[r]&N(i)\ar[r]&Y,
}
\end{equation*}
where $j_{(i,f)}\in J$, and let $M_{(I/f)}\to N_{(I/f)}$ be the coproduct $\coprod_{i\in(I/f)} \limits j_{(i,f)}$. Define an endofunctor $Q$ of $(\sW/Y)$ by
\begin{equation*}
Qf:=\left[X\coprod_{M_{(I/f)}} N_{(I/f)}\to Y\right]
\end{equation*}
for any morphism $f:X\to Y$ in $\sW_c$. For any regular cardinal $\alpha$, set $Q^{\alpha}:=\colim_{\beta<\alpha}Q^{\beta}$. This provides, for any morphism $f:X\to Y$ in $\sW_c$, a functorial factorization
\begin{equation*}
\xymatrix@1@C=18pt{X\ar[r]&Q^{\kappa}f\ar[r]&Y}
\end{equation*}
where the morphism $X\to Q^\kappa f$ is in $\cell J$ and the morphism $Q^\kappa f\to Y$ is in $\inj J$.

The containment $(\cof J)_c \subset (\sW \cap \cof I)_c$ follows from Proposition 2.1.15 in \cite{hovey-book}, from the small object argument above, and from hypothesis (4) of the theorem. This is because any morphism in $(\cof J)_c$ is a retract of a morphism in $(\cell J)_c$, via the retract argument and the factorization provided above (as well as the hypothesis that $J$ consists of cofibrations between cofibrant objects). Next, hypothesis (4) ensures that $(\cell J)_c \subset (\sW \cap \cof I)_c$, and both $\sW$ and $\cof I$ are closed under retract (the former by hypothesis (2) in Theorem \ref{thm:smith-thm-semi-output}; the latter because it is defined via a lifting property).
\end{proof}

Observe that it is not true in general for semi-model categories that trivial cofibrations are closed under transfinite composition and pushout. The class of morphisms $\cell J$ might not be contained in $\sW \cap \cof I$, even though it is always contained in $\cof J$. However, requiring the domains of the morphisms in $J$ to be cofibrant and only considering pushouts via morphisms to cofibrant objects will result in $(\cell J)_c$ being contained in $(\sW \cap \cof I)_c$ by Lemma \ref{lemma:transfinite-comp-and-pushout} (hence $(\cof J)_c\subset (\sW \cap \cof I)_c$). Similarly, Lemma \ref{lemma:transfinite-comp-and-pushout} implies that $(\cof J)_c\supset (\sW \cap \cof I)_c$.

Next we address the existence of a set $J$ which factorizes squares as above.

\begin{lemma} \label{lemma:construction-of-J}
Let $\M$ be a locally presentable category, with a set $I$ of morphisms and a class $\sW$ of morphisms (satisfying the two out of three property) such that $\sW_c$ is accessible and $\inj(I) \subset \sW$. Then there is a set $J$ satisfying the conditions of the lemma above.
\end{lemma}

\begin{proof} Suppose $i:K\to L$ is in $I$. Since $\sW_c$ is an accessibly embedded accessible subcategory of the arrow category $Arr(\M)$, there exists a subset $\sW(i)\subset \sW_c$ such that for any commutative square
\begin{equation*}
\xymatrix@C=18pt@R=18pt{
K\ar[d]_i\ar[r]&M\ar[d]\\
L\ar[r]&N
}
\end{equation*}
in which $M\to N$ is in $\sW_c$, there exist a morphism $w:P\to Q$ in $\sW(i)$ and a commutative diagram
\begin{equation*}
\xymatrix@C=18pt@R=18pt{
K\ar[d]\ar[r]&P\ar[d]\ar[r]&M\ar[d]\\
L\ar[r]&Q\ar[r]&N.
}
\end{equation*}
It thus suffices to find, for every square of the type on the left, an element of $(\sW\cap\cof I)_c$ factoring it.

For every $i$ and $w$ as above, and every commutative square
\begin{equation*}
\xymatrix@C=18pt@R=18pt{
K\ar[d]_i\ar[r]&P\ar[d]\\
L\ar[r]&Q,
}
\end{equation*}
use the small object argument to factor the morphism $L\coprod_K P\to Q$ through an object $R$ as an element of $\cell I$ followed by an element of $\inj I$. This yields a commutative diagram
\begin{equation*}
\xymatrix@C=18pt@R=18pt{
K\ar[d]\ar[r]&P\ar[d]\ar@{=}[r]&P\ar[d]\\
L\ar[r]&R\ar[r]&Q
}
\end{equation*}
factoring the original square. Furthermore, $P\to R$ is in $\sW$ because $P\to Q$ is in $\sW$ and $R\to Q$ is in $\inj I$, which is assumed to be in $\sW$. Finally, $P\to R$ is in $\cof I$ because it's the composite $P\to L\coprod_K P \to R$ where the first morphism is a pushout of $i$ (hence is a cofibration) and the second is in $\cell I$ (hence is a cofibration). Thus, $P\to R$ is in $\sW \cap \cof I$.

Here we are using the fact that the cofibrations, $\cof I$, are closed under transfinite composition and pushout without any hypothesis on the domains and codomains of the morphisms in question.
\end{proof}

Just as in \cite[Corollary 2.7]{barwickSemi}, we also have a corollary which replaces the set $J$ produced above by a set of morphisms with cofibrant domains.

\begin{corollary} \label{cor:construction-of-J-cof-domains}
Under the conditions of Theorem \ref{thm:smith-thm-semi-output}, a set $J$ can be constructed satisfying the hypotheses of Lemma \ref{lemma:cof-J-characterization} and consisting of morphisms between cofibrant objects.
\end{corollary}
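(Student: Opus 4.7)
The plan is to start from the set $J$ produced by Lemma \ref{lemma:construction-of-J} and to replace each of its elements by a ``cofibrant replacement'' obtained through two applications of the factorization $\cell I \circ \inj I$. Concretely, for each $j\colon P\to R$ in $J$, I would first factor the map $\emptyset \to P$ from the initial object through some $\tilde P$ as a map in $\cell I$ followed by a map in $\inj I$; by hypothesis (5) of Theorem \ref{thm:smith-thm-semi-output} the initial object is cofibrant, so $\tilde P$ is cofibrant. Next I would factor the composite $\tilde P \to P \xrightarrow{j} R$ as $\tilde j\colon \tilde P \to \tilde R$ in $\cell I$ followed by $\tilde R \to R$ in $\inj I$. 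Then $\tilde R$ is cofibrant (as the codomain of a cofibration with cofibrant domain), $\tilde j$ is a cofibration by construction, and $\tilde j$ is a weak equivalence by the two-out-of-three property (using hypothesis (3), since the two ``replacement'' maps $\tilde P\to P$ and $\tilde R\to R$ lie in $\inj I \subset \sW$, and $j\in \sW$). Thus $\tilde j \in (\sW \cap \cof I)_c$ as required.

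Let $J'$ be the resulting set of maps $\tilde j$. I then need to verify that $J'$ still has the factorization property required by Lemma \ref{lemma:cof-J-characterization}. Given any square from $i\colon K\to L$ in $I$ to a weak equivalence $w\colon M\to N$, the original set $J$ supplies a factorization through some $j\colon P\to R$ in $J$. The idea is now to lift this factorization from $j$ to $\tilde j$ through two applications of the left lifting property. First, observe that $K$ is cofibrant (hypothesis (5)) and $L$ is cofibrant as well, since $\emptyset \to L$ factors as $\emptyset \to K \to L$ with both maps in $\cof I$. Consequently, the maps $\emptyset \to K$ and $\emptyset \to L$ have the left lifting property against the trivial fibrations $\tilde P \to P$ and $\tilde R \to R$.

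Using this, I would first lift $K\to P$ through $\tilde P \to P$ to obtain $K\to \tilde P$. Composing with $\tilde j$ and then $\tilde R \to R$ recovers $K\to R$, which equals $K\to L\to R$. This produces a commutative square
\[
\xymatrix@C=18pt@R=18pt{
K \ar[d]_{i} \ar[r] & \tilde R \ar[d] \\
L \ar[r] & R
}
\]
whose right vertical is a trivial fibration and whose left vertical $i$ is a cofibration, so it admits a lift $L \to \tilde R$. The resulting diagram
\[
\xymatrix@C=18pt@R=18pt{
K \ar[d]_{i} \ar[r] & \tilde P \ar[d]^{\tilde j} \ar[r] & M \ar[d]^{w} \\
L \ar[r] & \tilde R \ar[r] & N
}
\]
factors the original square through $\tilde j \in J'$, as desired.

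The main technical point, and the only place where something could go wrong, is ensuring that the two successive lifts are compatible, i.e.\ that after choosing $K\to \tilde P$ we can still lift $L\to R$ through $\tilde R \to R$ in a way that extends the given $K\to \tilde R$. This is handled precisely by the second lifting problem above, whose commutativity reduces to the identity $K\to \tilde P \to \tilde R \to R = K\to L\to R$, which in turn follows from the definitions of $\tilde j$ and the original factorization through $j$. With these observations, the corollary is immediate.
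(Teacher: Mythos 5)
Your construction is correct, but it proceeds differently from the paper's. The paper follows Barwick's Corollary 2.7: for each commutative square from a generating cofibration $i\colon K\to L$ to a map $j\in J_0$, it factors $K\to M$ as a cofibration $K\to M'$ followed by a map in $\inj I$, forms the pushout $L\coprod_K M'$, and then factors $L\coprod_K M'\to N$ the same way; the middle map $M'\to N'$ is a cofibration between cofibrant objects and a weak equivalence by two-out-of-three, and the factorization of the square is automatic from the construction, so no lifting is ever needed (the resulting $J$ is indexed by such squares). You instead replace each $j\colon P\to R$ in $J_0$ once and for all by a cofibrant approximation $\tilde j\colon\tilde P\to\tilde R$ built from two $(\cell I,\inj I)$ factorizations starting at $\emptyset$, and then restore the square-factorization property by solving two lifting problems against the $\inj I$-maps $\tilde P\to P$ and $\tilde R\to R$, using that $\emptyset\to K$ and $i$ itself lie in $\cof I$; the commutativity checks you indicate do go through (with the understanding that the maps $\tilde P\to M$ and $\tilde R\to N$ in your final diagram are the composites through $P$ and $R$, which you should state explicitly, and that the horizontal composites recover the original square). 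Your route yields a set $J'$ in bijection with $J_0$ and exploits hypothesis (5) through lifting, whereas the paper's route avoids lifting entirely at the cost of indexing $J$ by squares; both give a set in $(\sW\cap\cof I)_c$ satisfying the hypotheses of Lemma \ref{lemma:cof-J-characterization}, so either argument suffices for Theorem \ref{thm:smith-thm-semi-output}.
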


\begin{proof}
Let $J_0$ be the set of morphisms produced by Lemma \ref{lemma:construction-of-J} above. Following \cite[Corollary 2.7]{barwickSemi}, we factorize any commutative square
\begin{equation*}
\xymatrix@C=18pt@R=18pt{
K\ar[d]_i \ar[r]&M\ar[d]^j\\
L\ar[r]&N
}
\end{equation*}

with $i\in I$ and $j\in J_0$ into a commutative diagram
\begin{equation*}
\xymatrix@C=18pt@R=18pt{
K\ar[d]\ar[r]&M' \ar[d]\ar[r]&M\ar[d]\\
L\ar[r]&N'\ar[r]&N
}
\end{equation*}

in which $M'$ is cofibrant and $M'\to N'$ is in $(\sW \cap \cof I)_c$. To do so, use the small object argument to factor $K\to M$ as a cofibration $K\to M'$ followed by a trivial fibration $M'\to M$ and then factor $L\coprod_K M'\to N$ as a cofibration $L\coprod_K M'\to N'$ followed by a trivial fibration $N'\to N$. The morphism $M'\to L\coprod_K M'$ is a pushout of $K\to L$ and so is a cofibration. The morphism $L\coprod_K M'\to N'$ is constructed to be a cofibration. Furthermore, because $M'\to M$, $N'\to N$, and $M\to N$ are weak equivalences the two out of three property implies $M'\to N'$ is a weak equivalence. That $M'$ and $N'$ are cofibrant follows from the fact that $K$ and $L$ are cofibrant, which is part of our hypotheses on $I$. The set of morphisms $M' \to N'$ is the set required.
\end{proof}

Using these lemmas we may prove the theorem.

\begin{proof}[Proof of Theorem \ref{thm:smith-thm-semi-output}]

We check the axioms in Definition \ref{defn:semi} directly. First, observe that $\M$ is assumed to be locally presentable so it is certainly bicomplete. M1 is automatic, as we have previously remarked. M2 is hypothesis (1) of the theorem. For M3, the closure of $\sW$ under retracts is hypothesis (2) of the theorem. Closure of fibrations under retracts follows from the fact that fibrations are defined to be $\inj J$. 
Closure of cofibrations under retracts follows from the fact that the cofibrations are defined to be $\cof I$. This also covers M4i. For M5i, factor a morphism $f$ into an element $i$ of $\cell I$ followed by an element $p$ of $\inj I$. By construction, $p$ is a trivial fibration. Because transfinite composites of pushouts of cofibrations are cofibrations, $i$ is a cofibration.

We turn now to the places where the definition of a semi-model category differs from that of a model category. For M5ii, we must show that any morphism $f:X\to Y$ with a cofibrant domain admits a factorization into a trivial cofibration (i.e. an element of $\sW\cap \cof I$) followed by a fibration. The set $J \subset (\sW \cap \cof I)_c$ produced by Corollary \ref{cor:construction-of-J-cof-domains} has the property that $(\cof J)_c  \supset (\sW \cap \cof I)_c$. With this set in hand we may factor $f$ into $\gamma(f)\circ \delta(f)$ where $\delta(f)$ is in $\cell J$ and $\gamma(f)$ is in $\inj J$ (equivalently, $\gamma(f)$ is a fibration).

If $X$ is cofibrant then the proof of Lemma \ref{lemma:cof-J-characterization} demonstrates that $\delta(f)$ is a trivial cofibration, because the factoring objects $Q^\beta$ are constructed via a transfinite process beginning with $X$ and progressing via pushouts with respect to coproducts of the morphisms in $J$. As each morphism in $J$ is a cofibration between cofibrant objects, these coproducts are again trivial cofibrations between cofibrant objects, and so each pushout is again a morphism of this type. Thus, hypothesis (4) guarantees us that $\delta(f)$ is a trivial cofibration.

For M4ii, let $f$ be a trivial cofibration whose domain is cofibrant, i.e. $f\in (\sW \cap \cof I)_c$. Lemma \ref{lemma:cof-J-characterization} proves $(\sW \cap \cof I)_c = (\cof J)_c$, so $f$ has the left lifting property with respect to $\inj J$ (i.e. with respect to all fibrations).

\end{proof}

\section{Left Bousfield localization for locally presentable semi-model categories} \label{sec:bousfield}

In this section we will use Theorem \ref{thm:smith-thm-semi-output} to prove existence of left Bousfield localization for semi-model categories. We first need a few facts about locally presentable semi-model categories, following \cite{barwickSemi}. The first is the semi-model category analogue of \cite[Proposition 2.5]{barwickSemi}:

\begin{proposition} \label{prop:semi-combinatorial-facts}
Suppose $\M$ is a locally presentable cofibrantly generated semi-model category, with generating cofibrations $I$. For any sufficiently large regular cardinal $\kappa$:
\begin{enumerate}
\item There is a $\kappa$-accessible functorial factorization of each morphism into a cofibration followed by a trivial fibration.
\item There is a $\kappa$-accessible functorial factorization of each morphism with cofibrant domain into a trivial cofibration followed by a fibration.
\item There is a $\kappa$-accessible cofibrant replacement functor.
\item There is a $\kappa$-accessible fibrant replacement functor on cofibrant objects.
\item Arbitrary $\kappa$-filtered colimits preserve weak equivalences with cofibrant domains.
\item Arbitrary $\kappa$-filtered colimits of weak equivalences with cofibrant domains are homotopy colimits.
\item The class $\sW_c$ of weak equivalences with cofibrant domains, is $\kappa$-accessible.
\end{enumerate}
\end{proposition}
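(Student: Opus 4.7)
The plan is to follow the argument of \cite[Proposition 2.5]{barwickSemi}, modifying each step so that constructions requiring a cofibrant domain (as in axiom M5ii) are either run on cofibrant inputs or funneled through a cofibrant replacement. First I would fix a regular cardinal $\kappa$ large enough that the domains and codomains of $I$, and of the set $J$ of generating trivial cofibrations produced by Corollary \ref{cor:construction-of-J-cof-domains}, are all $\kappa$-presentable; such $\kappa$ exists by local presentability of $\M$ and can be enlarged as later steps require.

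For (1) I would apply Quillen's small object argument to $I$ with transfinite composition of length $\kappa$. The resulting functorial factorization is $\kappa$-accessible because each stage is a pushout of a coproduct indexed by commutative squares out of $\kappa$-presentable objects. Part (2) is the same construction with $J$ in place of $I$. The semi-model subtlety appears here: only when $f$ has cofibrant domain can we conclude that the left leg, a transfinite composition of pushouts of maps in $J$ along morphisms to cofibrant targets, is a trivial cofibration; this is exactly guaranteed by hypothesis (4) of Theorem \ref{thm:smith-thm-semi-output} together with Lemma \ref{lemma:transfinite-comp-and-pushout}.

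Parts (3) and (4) would follow immediately. For (3), I would apply the factorization of (1) to $\emptyset \to X$ to produce a cofibrant replacement $QX$ (using that $\emptyset$ is cofibrant by hypothesis (5) of Theorem \ref{thm:smith-thm-semi-output}). For (4), I would apply the factorization of (2) to $QX \to \ast$ (the terminal object exists by bicompleteness of $\M$) to produce a trivial cofibration $QX \to RQX$ with $RQX$ fibrant and cofibrant; the composite $RQ$ serves as the fibrant-cofibrant replacement functor. Both $Q$ and $RQ$ are $\kappa$-accessible because the underlying small object arguments are.

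For (5) through (7) the key point is that $Q$ and $R$ preserve $\kappa$-filtered colimits. Given a $\kappa$-filtered diagram of weak equivalences $\{f_\alpha\}$, replacing it pointwise by $\{RQf_\alpha\}$ yields a diagram of weak equivalences between cofibrant-fibrant objects; these are homotopy equivalences, their $\kappa$-filtered colimit is again a homotopy equivalence, and two-out-of-three applied to the zigzag $X \leftarrow QX \to RQX$ yields (5). Part (6) would then follow because a $\kappa$-filtered colimit of a diagram of cofibrant objects computes the homotopy colimit, and (5) shows that a level-wise weak equivalence of such diagrams induces a weak equivalence of colimits. Part (7), which I expect to be the main obstacle, requires showing that $\sW \subset \Arr(\M)$ is an accessibly embedded accessible subcategory: closure under $\kappa$-filtered colimits is precisely (5) and closure under retracts is M3, and one then argues, following \cite{barwickSemi} and Proposition A.2.6.8 of \cite{lurie-htt}, via a solution-set argument in the locally presentable category $\Arr(\M)$ using the accessibility of $RQ$, that every weak equivalence is a $\kappa$-filtered colimit of $\kappa$-presentable weak equivalences, establishing the $\kappa$-accessibility of $\sW$.
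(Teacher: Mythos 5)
Your handling of (1)--(4) matches the paper's route (small object argument, $\kappa$-accessibility of the factorizations, fibrant replacement as $R\circ Q$). The genuine gap is in (5), and it propagates to (6) and (7). After replacing the diagram pointwise by $RQ$, you assert that a $\kappa$-filtered colimit of homotopy equivalences between bifibrant objects is again a homotopy equivalence. This step is unjustified: homotopy inverses and the homotopies witnessing them are not natural in the index, so they do not assemble into any map out of the colimit, and you cannot fall back on a Whitehead-type argument since whether the colimit map is a weak equivalence is precisely what is being proved. This is the crux of the proposition, and the paper (following \cite[Proposition 2.5]{barwickSemi}, as in \cite{dugger}) argues differently: factor the objectwise weak equivalence using the $\kappa$-accessible factorization of part (1) (cofibration followed by trivial fibration, since factorization (2) cannot be applied to maps with non-cofibrant domain), use $\kappa$-accessibility to identify the colimit of the factorizations with the factorization of the colimit map, and prove that the colimit of the trivial-fibration legs is again a trivial fibration by a lifting argument against $I$ --- this is exactly where the $\kappa$-presentability of the domains and codomains of $I$ is used, a tool your proposal never invokes. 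Likewise the paper's proof of (7) is not a solution-set argument: it exhibits $\sW$ as the preimage of the trivial fibrations (accessible and accessibly embedded, again via lifting against $I$) under the accessible functor $\Arr(\M)\to\Arr(\M)$ sending a morphism to the right-hand factor of its (cofibration, trivial fibration) factorization. Your sketch of (7), "every weak equivalence is a $\kappa$-filtered colimit of $\kappa$-presentable weak equivalences," is not an argument, and your (6) rests on the unsupported claim that a $\kappa$-filtered colimit of a diagram of cofibrant objects computes the homotopy colimit; both also lean on the broken (5).

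A secondary, repairable issue: the proposition is about an arbitrary locally presentable cofibrantly generated semi-model category, yet you invoke Corollary \ref{cor:construction-of-J-cof-domains} and hypotheses (4) and (5) of Theorem \ref{thm:smith-thm-semi-output}, none of which are assumed here; in particular the generating trivial cofibrations $J$ of Definition \ref{defn:semi} need not have cofibrant domains. What you actually need is intrinsic to the semi-model structure: axiom M1 gives cofibrancy of the initial object, and Lemma \ref{lemma:transfinite-comp-and-pushout}(2) gives that relative $J$-cell complexes with cofibrant domain are trivial cofibrations, which is all that part (2) requires. Fixing these citations is easy; the gap in (5)--(7) is not.
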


\begin{proof}
The proof proceeds just as in \cite[Proposition 2.5]{barwickSemi}, using the small object argument to prove (1)-(4). Note that any object can be fibrantly replaced using the functor of (4), if one first applies cofibrant replacement. To prove (5) and (6), we follow \cite[Proposition 2.5]{barwickSemi}. Given a weak equivalence with cofibrant domain, we factor it into a trivial cofibration followed by a fibration $p$. To prove $p$ is a trivial fibration, we apply a lifting argument against morphisms in $I$, just as in \cite[Proposition 2.5]{barwickSemi}, relying on the $\kappa$-presentability of the domains and codomains of objects in $I$.

To prove (7), we use the $\kappa$-accessible factorization from (2). By the two out of three property, the class $\sW_c$ is the preimage of the class of trivial fibrations under this functor. Once $\kappa$ is chosen large enough that the (co)domains of $I$ are $\kappa$-presentable, the proof that the class of trivial fibrations is accessible follows precisely as it does in \cite[Proposition 2.5]{barwickSemi} and \cite[Corollary A.2.6.6]{lurie-htt}. That is, we realize the class of trivial fibrations as the preimage, under an accessible functor, of an accessible class of morphisms of sets (namely, the surjective morphisms). It follows that $\sW_c$ is $\kappa$-accessible.
\end{proof}

We turn now to left Bousfield localization. The following two theorems combine to prove Theorem \ref{main}. We remind the reader that cofibrantly generated semi-model categories have simplicial mapping spaces defined via hammocks \cite[Notation 3.61]{barwickSemi}, that we will denote $map(-,-) \in sSet$. Such mapping spaces may be computed via cosimplicial and simplicial resolutions, up to a zig-zag of weak equivalences \cite[Scholium 3.64]{barwickSemi}.

Given a class of morphisms $\cat C$ in a cofibrantly generated semi-model category $\M$, an object $W$ is called \textit{$\cat C$-local} if it is fibrant in $\M$ and $map(f,W)$ is a weak equivalence of simplicial sets for all $f\in \cat C$. A morphism $g$ in $\M$ is a \textit{$\cat C$-local equivalence} if $map(g,W)$ is a weak equivalence for all $\cat C$-local objects $W$. Several properties about $\cat C$-local objects and equivalences, proven in \cite{hirschhorn} without reference to a model structure on $\M$, will be used below. Because a set of morphisms $\cat C$ can always be replaced by a set of cofibrations between cofibrant objects, without changing the left Bousfield localization $L_{\cat C}\M$, we will always assume $\cat C$ is a set of cofibrations between cofibrant objects. For the proof that follows, we advise the reader to have copies of \cite{barwickSemi, hirschhorn} on hand. 

\begin{theorem} \label{thm:loc-exists}
If $\M$ is a locally presentable, cofibrantly generated semi-model category in which the domains of the generating cofibrations are cofibrant. For any set of morphisms $\cat C$ in $\M$, there exists a cofibrantly generated semi-model structure $L_{\cat C}(\M)$ on $\M$ with weak equivalences defined to be the $\cat C$-local equivalences, (generating) cofibrations defined to match the (generating) cofibrations of $\M$, and fibrations defined by the right lifting property with respect to some set $J_{\cat C}$ of $\cat C$-local equivalences which are also cofibrations with cofibrant domains. Furthermore, the fibrant objects of $L_{\cat C}\M$ are precisely $\cat C$-local fibrant objects in $\M$.
\end{theorem}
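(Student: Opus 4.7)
The plan is to apply Theorem \ref{thm:smith-thm-semi-output} with $\sW$ taken to be the class of $\cat C$-local equivalences and $I$ the generating cofibrations of $\M$. Hypothesis (5) is immediate from the standing assumption that $I$ has cofibrant domains and that the initial object of $\M$ is cofibrant. Hypothesis (3) follows because any map in $\inj(I)$ is an $\M$-trivial fibration, hence an $\M$-weak equivalence, hence a $\cat C$-local equivalence (applying $\Map(-,W)$ for each $\cat C$-local $W$ and invoking Ken Brown's lemma in the semi-model setting). Two-out-of-three and retract closure, hypothesis (2), follow immediately from the corresponding properties in $sSet$ applied to $\Map(-,W)$ for each $\cat C$-local $W$. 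For accessibility, hypothesis (1), I would combine Proposition \ref{prop:semi-combinatorial-facts} with the set-indexed description of $\cat C$-local fibrant objects, running the argument of \cite[Proposition 2.5]{barwickSemi} verbatim to conclude that $\sW$ is $\kappa$-accessible for a sufficiently large regular cardinal $\kappa$.

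The substantive step is hypothesis (4); this is precisely where the avoidance of left properness matters. Let $f : A \to B$ lie in $(\sW \cap \cof I)_c$, and form its pushout $f' : A' \to B'$ along a map $A \to A'$ with $A'$ cofibrant. Then $f'$ is a cofibration and $B'$ is cofibrant, so the pushout square has all four corners cofibrant with one leg a cofibration. As noted in the introduction, such a square is a homotopy pushout in any semi-model category, so for any $\cat C$-local $W$ the application of $\Map(-,W)$ yields a homotopy pullback of Kan complexes; since $\Map(f,W)$ is a weak equivalence by hypothesis, so is $\Map(f',W)$, and therefore $f' \in \sW$. The transfinite composition case is analogous: $\Map(-,W)$ converts a tower of trivial cofibrations between cofibrant objects into a tower of trivial Kan fibrations, whose limit projects onto the initial stage by a weak equivalence, using Proposition \ref{prop:semi-combinatorial-facts}(5)--(6) to justify the colimit-to-limit identification.

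Theorem \ref{thm:smith-thm-semi-output} then produces the desired semi-model structure $L_{\cat C}\M$ together with a set $J_{\cat C}$ of generating trivial cofibrations consisting of cofibrations between cofibrant objects (Corollary \ref{cor:construction-of-J-cof-domains}). For the characterization of fibrant objects, the direction $(\Leftarrow)$ is the cleaner one: given $W$ which is both $\M$-fibrant and $\cat C$-local, for each $j \in J_{\cat C}$ the map $\Map(j, W)$ is a Kan fibration between Kan complexes (by the semi-model version of the SM7 axiom, applicable since $j$ is a cofibration between cofibrant objects and $W$ is $\M$-fibrant) and also a weak equivalence (because $W$ is $\cat C$-local and $j$ is a $\cat C$-local equivalence), hence a trivial Kan fibration, and the required lift exists by simplicial adjunction. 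For the converse, I would enlarge $J_{\cat C}$ --- without changing $\inj(J_{\cat C})$, since by Lemma \ref{lemma:cof-J-characterization} any map lying in $(\sW \cap \cof I)_c$ already belongs to $\cof J_{\cat C}$ --- to include both cofibrant-domain replacements of the generating trivial cofibrations of $\M$ (forcing $\M$-fibrancy of any $L_{\cat C}\M$-fibrant object) and the pushout-products $c \boxprod (\partial\Delta[n]\hookrightarrow\Delta[n])$ for $c \in \cat C$ (detecting $\cat C$-locality via the standard Hirschhorn argument). I expect the principal technical obstacle to be constructing the cofibrant-domain replacements for the generating trivial cofibrations of $\M$ so that $L_{\cat C}\M$-fibrancy really does force $\M$-fibrancy, rather than merely the weaker RLP against cofibrant-domain trivial cofibrations of $\M$.
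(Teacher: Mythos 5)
Your verification of the hypotheses of Theorem \ref{thm:smith-thm-semi-output} is essentially the paper's own argument: same choice of $\sW$ and $I$, same accessibility reduction to \cite[Proposition 2.5 and Lemmas 4.5--4.6]{barwickSemi} via Proposition \ref{prop:semi-combinatorial-facts}, and the same key point for hypothesis (4), namely that a pushout square with all corners cofibrant and one leg a cofibration is a homotopy pushout, so no left properness is needed. (One gloss: for hypothesis (2) the paper does not get two-out-of-three ``immediately''--- it first cofibrantly replaces the maps and the local object $W$ so that the simplicial resolution $\hat W$ exists in the Reedy semi-model structure; your sketch should acknowledge this, but it is the same argument.)

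The genuine gap is in the ``furthermore'' clause. Your plan for the direction ``fibrant in $L_{\cat C}\M$ $\Rightarrow$ fibrant in $\M$'' is to enlarge $J_{\cat C}$ by cofibrant-domain replacements $Qj$ of the generating trivial cofibrations $j$ of $\M$. Adding such maps is harmless (they lie in $(\sW\cap\cof I)_c$, so $\inj J_{\cat C}$ is unchanged), but it buys nothing: an object with the right lifting property against $Qj$ need not lift against $j$ itself --- that implication is exactly \cite[Proposition 13.2.1]{hirschhorn}, which requires left properness and which this paper explicitly side-steps rather than reproves. So, as you yourself flag, the converse direction of the fibrant-object characterization is not established by your argument. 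The paper takes a different route here: it does not modify $J_{\cat C}$ at all, but instead deduces the characterization of fibrant objects from homotopy-orthogonality results that hold in semi-model categories, namely \cite[Scholium 3.64, Corollary 3.66]{barwickSemi} or Hirschhorn's theory of homotopy orthogonal pairs \cite[Propositions 17.8.5, 17.8.9]{hirschhorn}; that is the missing ingredient you need in place of the $Qj$-enlargement. A smaller but real imprecision: your proposed horns $c\boxprod(\partial\Delta[n]\hookrightarrow\Delta[n])$ presuppose a simplicial tensoring that $\M$ is not assumed to have; in this generality they must be formed via cosimplicial resolutions (Hirschhorn's augmented horns), which is again part of the framing/orthogonality machinery rather than a literal pushout-product in $\M$. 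The forward direction of your characterization (an $\M$-fibrant $\cat C$-local object lifts against $J_{\cat C}$) is fine and is consistent with the paper's citation-based treatment.
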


\begin{proof}
The semi-model structure on $L_{\cat C}(\M)$ will be obtained via Theorem \ref{thm:smith-thm-semi-output} as soon as we check conditions (1)-(5). We begin with condition (1). First, Lemma 4.5 of \cite{barwickSemi} states that the class of $\cat C$-local objects is an accessibly embedded, accessible subcategory of $\M$. This lemma remains true for semi-model categories, since the proof only requires the existence of an accessible fibrant replacement functor on cofibrant objects (thanks to the derived hom) and the fact that the subcategory of weak equivalences of simplicial sets is accessibly embedded and accessible. Next, Lemma 4.6 in \cite{barwickSemi} implies that the class of $\cat C$-local equivalences with cofibrant domains is an accessibly embedded, accessible subcategory of $Arr(\M)$. Barwick's proof only requires that $\kappa$-filtered colimits are homotopy colimits for sufficiently large $\kappa$, and again this holds for semi-model categories (see Proposition \ref{prop:semi-combinatorial-facts}).

This completes the verification of (1), but for the reader's convenience, we will explain how to directly prove that the collection $\cat W_{\cat C,c}$ of $\cat C$-local equivalences with cofibrant domains is an accessibly embedded, accessible subcategory of $Arr(\M)$, by realizing $\cat W_{\cat C,c}$ as the inverse image of the collection of weak equivalences of $Arr(sSet)$ under an accessible functor. Thanks to \cite[Lemma 4.5]{barwickSemi}, we can choose $\lambda$ such that $\M$ is $\lambda$-presentable and the collection of $\cat C$-local objects is a $\lambda$-accessible subcategory (hence is generated under $\lambda$-filtered colimits by a set $\cat L$ of objects). Define a functor $F: Arr(\M) \to Arr(sSet)$ by $F(f) = \coprod_{Z\in \cat L} map(f,Z)$; it is contravariant accessible, since $map(\colim f_\alpha,Z)\cong \lim map(f_\alpha,Z)$, so $F$ preserves colimits when viewed as a covariant functor $F: Arr(\M)\to Arr(sSet)^{op}$. By construction, $F$ takes $\cat W_{\cat C,c}$ to weak equivalences of simplicial sets. Next, we show $\cat W_{\cat C,c} = F^{-1}(\cat W_{sSet})$. If $f:A\to B$ is a morphism such that $F(f)$ is a weak equivalence, and if $W$ is a $\cat C$-local object, then $W$ can be written as a $\lambda$-filtered colimit $W\cong \colim Z_\alpha$ of objects in $\cat L$. Using that $A$ and $B$ are $\lambda$-presentable, $map(f,W)\cong map(f,\colim Z_\alpha) \cong \colim map(f,Z_\alpha)$ is a $\lambda$-filtered colimit of weak equivalences of simplicial sets and hence is a weak equivalence. So $f$ is a $\cat C$-local equivalence as required.

The closure of the class of $\cat C$-local equivalences under the two out of three property is proven in a similar way to Proposition 3.2.3 in \cite{hirschhorn}. Namely, given $g,h,h\circ g$ one applies functorial cofibrant replacement. Given a $\cat C$-local object $W$, one applies the simplicial mapping space functor $map(-,W)$ to the diagram
\[
\xymatrix{\tilde{X} \ar[r]^{\tilde{g}} \ar[d] & \tilde{Y} \ar[r]^{\tilde{h}} \ar[d] & \tilde{Z} \ar[d]  \\  
X \ar[r]_g & Y \ar[r]_h & Z}
\]
and one then applies the two out of three property in sSet.

The closure of the class of $\cat C$-local equivalences under retract is proven analogously, following Proposition 3.2.4 in \cite{hirschhorn}, which again applies cofibrant replacement to the morphisms in question and then considers the morphisms induced in $sSet$ by $map(\tilde{f},W)$ for all $\cat C$-local $W$. This completes our proof of (2).

For (3), note that if $f$ is in inj$(I)$ in $L_{\cat C}(\M)$ then $f$ is in inj$(I)$ in $\M$. Thus, $f$ is a trivial fibration in $\M$. So all we need to show is that a weak equivalence of $\M$ is a $\cat C$-local equivalence. This is true by general properties of simplicial mapping spaces, even in a semi-model category \cite[Corollary 3.66]{barwickSemi}.

We must now check (4). Suppose $f:A\to B$ is an element of $(\sW \cap \cof I)_c$, i.e., a $\cat C$-local equivalence and a cofibration between cofibrant objects. Suppose $g:A\to X$ is any morphism such that $X$ is cofibrant. Then the pushout
\begin{align*}
\xymatrix{A \ar[r] \ar[d] \po & B\ar[d] \\ X\ar[r] & P}
\end{align*}
has $h:X\to P$ a cofibration and we must show it's a $\cat C$-local equivalence. We note that this square is a homotopy pushout square, because one leg is a cofibration, and all objects are cofibrant \cite[page 10]{spitzweck-thesis}. Note that this is where left properness would normally be required, but we don't need it because we assume $X$ is cofibrant. We fix a $\cat C$-local object $W$ and apply $map(-,W)$. 
Following \cite[Theorem 4.7]{barwickSemi}, note that the following is a homotopy pullback diagram in $sSet$:
\begin{align*}
\xymatrix{map(P,W) \ar[r] \ar[d] & map(X,W) \ar[d] \\ map(B,W) \ar[r] & map(A,W).}
\end{align*}
Since $f$ is a $\cat C$-local equivalence, the bottom horizontal map is a weak equivalence, hence so is the top map, hence $h$ is a $\cat C$-local equivalence as required. Next, consider a transfinite composition of elements of $(\sW \cap \cof I)_c$, encoded by a functor $F:\lambda \to \M$ and denoted $X_0 \to X_1 \to \dots$. The composition $cF: X_0 \to \colim X_\alpha$ is an element of $\cof I$, so we must show it's also a $\cat C$-local equivalence. For left proper model categories, this follows from \cite[Proposition 3.2.11]{hirschhorn}. For semi-model categories, it is even easier, because the colimit is already a homotopy colimit, since the diagram is Reedy cofibrant. We prove $cF$ is a $\cat C$-local equivalence by transfinite induction, noting that the case of finite compositions and successor ordinals follows from the two out of three property. So, let $\lambda$ be a limit ordinal and consider the following ladder, where the vertical morphisms are compositions:

\[
\xymatrix{
X_0 \ar[d] \ar@{=}[r] & X_0 \ar[d]\ar@{=}[r] & X_0\ar[d] \ar@{=}[r] & \dots \ar@{=}[r] & X_0 \ar@{..>}[d]\\
X_1 \ar[r] & X_2 \ar[r] & X_3 \ar[r] & \dots \ar[r] & \colim X_\alpha}
\]

All objects are cofibrant and all horizontal morphisms are in $(\sW \cap \cof I)_c$. The solid vertical morphisms are all $\cat C$-local equivalences by the inductive hypothesis. Hence, the dotted arrow, $cF$, is a $\cat C$-local equivalence as well, by applying $map(-,Z)$ for a $\cat C$-local object $Z$ to convert this into a homotopy limit diagram in the category of simplicial sets.

Finally, condition (5) is part of the hypotheses, since we assume $\M$ is a cofibrantly generated semi-model category with domains of $I$ cofibrant. We now prove the last sentence of the statement of the theorem. If $W$ is fibrant in $L_{\cat C}\M$ then by \cite[Corollary 3.66]{barwickSemi}, $W$ is $\cat C$-local. It is also clear that $W$ is fibrant in $\M$, e.g., because $id:\M \leftrightarrows L_{\cat C} \M:id$ is a Quillen pair. If $W$ is $\cat C$-local and fibrant in $\M$, one can prove it is fibrant in $L_{\cat C} \M$ by following Hirschhorn's theory of homotopy orthogonal pairs \cite[Propositions 17.8.5, 17.8.9]{hirschhorn}, which makes use of properties of the model category $sSet$, and holds for semi-model categories. Since $L_{\cat C} \M$ is a \textit{cofibrantly generated} semi-model category, $W\to \ast$ is a fibration if and only if it satisfies the right lifting property with respect to generating trivial cofibrations $i: A\to B$, which are all morphisms between cofibrant objects. Hence, Hirschhorn's proof directly translates to the semi-model category context, cosimplicially replacing $i$ to compute simplicial mapping spaces and using the theory of homotopy orthogonal squares to construct a lift proving that $W$ is fibrant in $L_{\cat C}\M$.
\end{proof}

We turn now to verifying the universal property of left Bousfield localization (with respect to left Quillen functors of semi-model categories \cite[Definition 1.12]{barwickSemi}).

\begin{theorem} \label{thm:universal}
Suppose that there is a cofibrantly generated semi-model structure $L_{\cat C}(\M)$ on the semi-model category $\M$ as in Theorem \ref{thm:loc-exists}. Then $L_{\cat C}(\M)$ satisfies the following universal property. Suppose $F:\M\to \cat{N}$ is any left Quillen functor of semi-model categories taking $\cat C$ into the weak equivalences of $N$. Then $F$ is a left Quillen functor when viewed as $F:L_{\cat C}(\M)\to \cat{N}$.
\end{theorem}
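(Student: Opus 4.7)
Since the cofibrations of $L_{\cat C}(\M)$ coincide with those of $\M$, $F$ automatically preserves cofibrations; the real content is showing that $F$ preserves trivial cofibrations with cofibrant domain. Let $f : A \to B$ be such a map in $L_{\cat C}(\M)$, so $f$ is a cofibration of $\M$ between cofibrant objects which is also a $\cat C$-local equivalence. Then $F(f)$ is already a cofibration between cofibrant objects of $\cat N$, and everything reduces to proving that $F(f)$ is a weak equivalence in $\cat N$.

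The plan is to use the standard characterization that a map between cofibrant objects in a semi-model category is a weak equivalence if and only if it induces a weak equivalence of simplicial mapping spaces into every fibrant object. For any fibrant $W \in \cat N$, the Quillen adjunction $F \dashv U$ gives a weak equivalence of simplicial sets $map_{\cat N}(F(f), W) \simeq map_{\M}(f, U(W))$. Consequently, the whole question reduces to showing that $U(W)$ is $\cat C$-local in $\M$ for every fibrant $W$ of $\cat N$: once that is established, $f$ being a $\cat C$-local equivalence forces $map_{\M}(f, U(W))$ to be a weak equivalence, and the Yoneda-style criterion then yields that $F(f)$ is a weak equivalence in $\cat N$.

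Verifying $\cat C$-locality of $U(W)$ is a second application of the same adjunction. By the standing convention preceding Theorem \ref{thm:loc-exists} we may take $\cat C$ to consist of cofibrations between cofibrant objects, and then for every $c \in \cat C$ we have $map_{\M}(c, U(W)) \simeq map_{\cat N}(F(c), W)$; since $F$ is left Quillen from $\M$ to $\cat N$, $F(c)$ is a cofibration between cofibrant objects of $\cat N$ and, by hypothesis, a weak equivalence, so with $W$ fibrant the right-hand side is a weak equivalence of simplicial sets, as required.

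The only delicate ingredient is the derived adjunction isomorphism of simplicial mapping spaces in the semi-model setting. This follows from the existence of cosimplicial and simplicial resolutions via the Reedy semi-model structure \cite[Theorem 3.12]{barwickSemi}, together with the usual Quillen-adjunction manipulations recalled in \cite[Section 1.1]{goerss-hopkins}, and no left properness enters the argument. Notably, this approach sidesteps the generating trivial cofibrations $J_{\cat C}$ produced by Theorem \ref{thm:loc-exists}: the universal property is established purely through simplicial mapping spaces and two applications of the adjunction $F \dashv U$.
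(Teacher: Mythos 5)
Your proposal is correct and follows essentially the same route as the paper's proof: reduce to showing $F$ preserves trivial cofibrations with cofibrant domain, then apply the derived mapping-space adjunction (Barwick's Scholium 3.64) twice—first to reduce to $U(W)$ being $\cat C$-local for each fibrant $W$, then to verify that locality from the hypothesis that $F(\cat C)$ lands in weak equivalences. The only cosmetic difference is that the paper invokes Lemma \ref{lemma:right-Quillen} via preservation of trivial fibrations by $U$ (noting trivial fibrations in both structures are $\inj I$), whereas you use the equivalent condition that $F$ preserves cofibrations plus trivial cofibrations with cofibrant domain.
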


To prove this, we need a lemma, inspired by \cite[Proposition 8.5.3]{hirschhorn}.

\begin{lemma} \label{lemma:right-Quillen}
Let $F:\cat M \leftrightarrows \cat N: U$ be a pair of adjoint functors between semi-model categories $\cat M$ and $\cat N$, and assume $\M$ is cofibrantly generated with a set of generating trivial cofibrations $J$ with cofibrant domains. Then the following are equivalent: 

\begin{enumerate}
\item $(F,U)$ is a Quillen pair.
\item $F$ preserves cofibrations and preserves trivial cofibrations between cofibrant objects.
\item $U$ preserves fibrations and trivial fibrations.
\item $F$ preserves cofibrations and $U$ preserves fibrations.
\item $F$ preserves trivial cofibrations whose domain is cofibrant and $U$ preserves trivial fibrations.
\end{enumerate}
\end{lemma}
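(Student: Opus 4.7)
The plan is to establish two adjunction-lifting correspondences and then observe that each of (1)--(5) amounts to the same conjunction, namely that $F$ preserves cofibrations and that $F$ sends trivial cofibrations with cofibrant domain to trivial cofibrations. First I would prove correspondence (A): $F$ preserves cofibrations if and only if $U$ preserves trivial fibrations. The classical argument (as in \cite[Proposition 8.5.3]{hirschhorn}) transfers verbatim to semi-model categories, because cofibrations are characterized in any semi-model category as the class of maps with the left lifting property against trivial fibrations: one direction is M4i, and the converse follows by factoring via M5i and applying the retract argument, neither of which invokes any cofibrancy hypothesis. Combined with the standard adjunction isomorphism for lifting squares, this yields (A).

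Next I would prove correspondence (B): $F$ sends trivial cofibrations with cofibrant domain to trivial cofibrations if and only if $U$ preserves fibrations. For the forward direction, let $p \colon X \to Y$ be a fibration in $\cat N$. Since $\cat M$ is cofibrantly generated, $U(p)$ is a fibration precisely when it lies in $\inj J$ for a set $J$ of generating trivial cofibrations, which by definition have cofibrant domain. For $j \in J$, the hypothesis makes $F(j)$ a trivial cofibration with cofibrant domain in $\cat N$, and by M4ii in $\cat N$ this has the left lifting property against $p$; the adjunction translates this into the required lift of $j$ against $U(p)$. For the converse, let $j \colon A \to B$ be a trivial cofibration with $A$ cofibrant. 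The object $F(A)$ is cofibrant because $F$ preserves the initial object and cofibrations (the latter being supplied by (A), which pairs with this half). For any fibration $p$ in $\cat N$, $U(p)$ is a fibration by hypothesis, so M4ii in $\cat M$ yields a lift of $j$ against $U(p)$, and the adjunction transfers it to a lift of $F(j)$ against $p$. Since $F(j)$ has cofibrant domain, M5ii in $\cat N$ factors it as $F(j) = q \circ i$ with $i$ a trivial cofibration and $q$ a fibration; the lift of $F(j)$ against $q$ presents $F(j)$ as a retract of $i$, and therefore $F(j)$ is itself a trivial cofibration.

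With (A) and (B) in hand, the equivalence of all five conditions is immediate. Conditions (1) and (2) coincide with Barwick's definition of a left Quillen functor between semi-model categories \cite[Definition 1.12]{barwickSemi}, and each of (3), (4), (5) combines one half handled by (A) with the other half handled by (B), so all five express the same conjunction. I expect the main obstacle to be the bookkeeping of cofibrancy hypotheses through every use of M4ii and M5ii, since in a semi-model category neither a general factorization nor a general lifting characterization of trivial cofibrations is available. Once this is handled carefully (in particular, in the converse of (B), where cofibrancy of $F(A)$ must be deduced from (A) before invoking the lifting argument), the proof reduces to the classical template.
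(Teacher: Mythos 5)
Your correspondence (A) is fine and matches what the paper actually uses (in any semi-model category, trivial fibrations are exactly $\inj(\mathrm{cofibrations})$ and cofibrations are exactly the maps with the left lifting property against trivial fibrations, by \cite[Lemma 1.7]{barwickSemi}, so the adjunction transfers these two conditions back and forth). The genuine gap is in the forward direction of your correspondence (B), i.e.\ the implication ``$F$ preserves trivial cofibrations with cofibrant domain $\Rightarrow$ $U$ preserves fibrations.'' You argue this by lifting $U(p)$ against a set $J$ of generating trivial cofibrations ``which by definition have cofibrant domain.'' That premise is false: the lemma as stated does not even assume cofibrant generation, and in the paper's definition of a cofibrantly generated semi-model category the domains of $J$ are only required to be small relative to $J$-cell maps with cofibrant domain --- they need not be cofibrant, and in the motivating examples (transferred structures on operad algebras) they are not. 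Indeed, the cofibrancy of the domains of the generating (trivial) cofibrations is precisely the special feature that Theorem \ref{main} assumes and Theorem \ref{main-smith} produces; it cannot be invoked for an arbitrary semi-model category $\cat M$. Without it, the maps $F(j)$, $j\in J$, are not covered by your hypothesis (and $j$ itself need not be a trivial cofibration with cofibrant domain), so the lifting argument does not establish that $U(p)\in\inj J$. More structurally: in a semi-model category fibrations are a specified class, not one characterized by lifting against trivial cofibrations with cofibrant domain, so no purely lifting-theoretic argument of this shape can recover ``$U$ preserves fibrations'' from conditions on $F$ alone.

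The paper avoids this issue entirely: it takes the equivalence of (1), (2), (3) as part of the definition of a Quillen pair of semi-model categories, citing \cite[Definition 1.12]{barwickSemi} and \cite[Section 12.1.8]{fresse-book}, and then only needs your correspondence (A) (in both directions) to absorb (4) and (5): the hypothesis on $F$ in (4) gives that $U$ preserves trivial fibrations, hence (4) reduces to (3), and the hypothesis on $U$ in (5) gives that $F$ preserves cofibrations, hence (5) reduces to (2). Your converse direction of (B) (which reproves one implication inside that cited equivalence, using (A) to get cofibrancy of $F(A)$ and then M4ii, M5ii and the retract argument) is correct, but the problematic forward direction is exactly the implication the paper outsources to the literature rather than proving by hand. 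To repair your write-up, either restrict the lemma to the setting where a set of generating trivial cofibrations with cofibrant domains is known to exist (as for the localized structures produced by Theorem \ref{thm:smith-thm-semi-output}), or follow the paper and treat (1)$\Leftrightarrow$(2)$\Leftrightarrow$(3) as the cited definitional equivalence.
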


\begin{proof}
The equivalence of (1) and (3) is part of the definition of a Quillen pair for semi-model categories \cite[Definition 1.12]{barwickSemi}, and (3) implies (2) is proven in \cite[Section 12.1.8]{fresse-book}. For (2) implies (3) we use an adjoint lifting argument and the assumptions on $\M$. For (4), we use that the hypothesis on $F$ implies $U$ preserves trivial fibrations, since trivial fibrations are characterized as morphisms satisfying the right lifting property with respect to cofibrations \cite[Lemma 1.7.1]{barwickSemi}. For (5), we use that the hypothesis on $U$ implies $F$ preserves cofibrations, which are characterized as morphisms satisfying the left lifting property with respect to trivial fibrations \cite[Lemma 1.7.1]{barwickSemi}.
\end{proof}

\begin{proof}[Proof of Theorem \ref{thm:universal}]
Let $G:\cat{N}\to \M$ be the right adjoint of $F$, and let $U$ denote $G$ viewed as a functor from $\cat{N}$ to $L_{\cat C}(\M)$, since as categories $\M$ and $L_{\cat C}(\M)$ are equal. We must prove $U$ is right Quillen \cite[Definition 1.12]{barwickSemi}. The trivial fibrations of $L_{\cat C}(\M)$ are equal to the trivial fibrations of $\M$, since both are characterized as $\inj I$. Thus, $U$ preserves trivial fibrations. We will now prove $F$ preserves trivial cofibrations whose domain is cofibrant, which is sufficient by Lemma \ref{lemma:right-Quillen}.

Let $g$ be a trivial cofibration between cofibrant objects in $L_{\cat C}\M$. We already know that $Fg$ is a cofibration, since the cofibrations of $\M$ and $L_{\cat C}\M$ coincide. To prove that $Fg$ is a weak equivalence in $\cat N$, it suffices to prove, for every fibrant $X$ in $\cat N$, that $map(Fg,X)$ is a weak equivalence of simplicial sets. Using \cite[Scholium 3.64]{barwickSemi}, we see that $map(Fg,X) \simeq map(g,UX)$. It is therefore sufficient to prove that $UX$ is a $\cat C$-local object, i.e., that $U$ takes fibrant objects of $\cat N$ to local objects of $\M$. 

To prove this, let $f$ be a morphism in $\cat C$, and note that, by \cite[Scholium 3.64]{barwickSemi} again, $map(f,UX) \simeq map(Ff,X)$. Since $\cat C$ consists of cofibrations between cofibrant objects, and $Ff$ is a weak equivalence by assumption, we see that $map(Ff,X)$ is a weak equivalence of simplicial sets, proving that $UX$ is $\cat C$-local as required.

\end{proof}

\begin{remark}
While we have not needed further results from \cite{hirschhorn}, effectively every result in \cite{hirschhorn} has an analogue for semi-model categories, sometimes with additional cofibrancy hypotheses. As part of a longer proof of Theorem \ref{thm:universal}, we proved semi-model categorical analogues of \cite[Proposition 7.2.18]{hirschhorn} (where in (2), domains and codomains must be cofibrant), \cite[Proposition 8.5.4]{hirschhorn} (where in (3) objects must be bifibrant instead of just fibrant), and \cite[Theorem 17.7.7]{hirschhorn} (about Reedy cofibrant replacements and simplicial mapping spaces). A key point was that several other authors had already worked out that Reedy semi-model categories behave precisely like Reedy model categories \cite{barwickSemi, goerss-hopkins, spitzweck-thesis}. We also verified several useful facts about the semi-model category $L_{\cat C} \M$, including \cite[Proposition 3.3.16]{hirschhorn} (characterization of local fibrations between local objects, if the domain is cofibrant), \cite[Theorem 3.2.13]{hirschhorn} (characterization of local equivalences between bifibrant objects in $L_{\cat C}\M$), and \cite[Theorem 3.1.6]{hirschhorn} (subsumed by Theorem \ref{thm:universal} above). 
\end{remark}

We conclude this section with an application of Proposition \ref{prop:semi-combinatorial-facts} to prove a semi-model categorical version of a result of Dugger \cite[Corollary 1.2]{dugger}. This has the pleasant property of demonstrating that the theory of combinatorial semi-model categories is homotopically the same as the theory of combinatorial model categories (and, hence, of presentable $\infty$-categories). This result is not required for the rest of the paper, but is an important property of the theory of combinatorial semi-model categories as a whole, and answers a question Zhen Lin Low once asked the second author.

\begin{proposition} \label{prop:dugger}
Every combinatorial semi-model category $\M$ is Quillen equivalent, as a semi-model category, to a left proper, combinatorial model category where all objects are cofibrant.
\end{proposition}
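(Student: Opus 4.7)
The plan is to adapt Dugger's proof of \cite[Corollary 1.2]{dugger} to the semi-model setting, using the infrastructure of Section \ref{sec:bousfield} together with Proposition \ref{prop:semi-combinatorial-facts}. The idea is to present $\M$ as a left Bousfield localization of simplicial presheaves on a small subcategory of $\M$, and to obtain the "all objects cofibrant" conclusion by working with the injective, rather than projective, model structure on those presheaves. Choose a regular cardinal $\lambda$ large enough that $\M$ is locally $\lambda$-presentable and that the accessible factorizations and (co)fibrant replacement functors of Proposition \ref{prop:semi-combinatorial-facts} are $\lambda$-accessible. Let $\cat C$ be a small full subcategory of cofibrant, $\lambda$-presentable objects of $\M$ such that every object of $\M$ is a $\lambda$-filtered colimit of objects of $\cat C$. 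Using the Reedy semi-model structure on $\M^{\Delta}$ \cite[Theorem 3.12]{barwickSemi}, pick a Reedy-cofibrant cosimplicial resolution $\gamma : \cat C \to \M^{\Delta}$ of the inclusion functor. This produces a left Quillen adjunction $\gamma_! : \mathrm{sPre}(\cat C)_{proj} \rightleftarrows \M : \tilde\gamma^*$ with $\tilde\gamma^*(X)(c) = map(\gamma(c), X)$.

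Dugger's argument supplies a small set $S$ of cofibrations between cofibrant objects in $\mathrm{sPre}(\cat C)_{proj}$ such that $\gamma_!$ sends $S$ to weak equivalences of $\M$ and such that left Bousfield localizing at $S$ makes the adjunction a Quillen equivalence. Now pass to the injective model structure $\mathrm{sPre}(\cat C)_{inj}$, which is combinatorial, left proper, and has every object cofibrant since its cofibrations are the monomorphisms. Because both structures share the same (objectwise) weak equivalences, and because the identity is a Quillen equivalence $\mathrm{sPre}(\cat C)_{proj} \to \mathrm{sPre}(\cat C)_{inj}$, localizing both at $S$ yields a further Quillen equivalence. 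On the injective side, the localization is constructed by classical Smith theory \cite{beke}, since left properness holds. Composing produces a Quillen equivalence $L_S(\mathrm{sPre}(\cat C)_{inj}) \simeq \M$, and the target remains a left proper combinatorial model category in which every object is cofibrant, since left Bousfield localization alters neither the cofibrations nor left properness.

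The step I expect to be the main obstacle is verifying, for a target $\M$ that is only a semi-model category, that Dugger's construction genuinely produces a Quillen equivalence after localization at $S$. This decomposes into three ingredients, all available in our setting. First, every cofibrant object of $\M$ is a homotopy colimit of a diagram valued in $\cat C$, which follows from $\lambda$-presentability and the existence of Reedy-cofibrant cosimplicial resolutions. Second, $\tilde\gamma^*(X)(c)$ computes the derived mapping space between $\gamma(c)$ and $X$ whenever $X$ is fibrant, because $\gamma(c)$ is cofibrant and the theory of simplicial and cosimplicial frames applies on the subcategory of cofibrant objects of $\M$ \cite[Section 3]{barwickSemi}. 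Third, one needs the semi-model analogue of the standard criterion that a Quillen adjunction is a Quillen equivalence iff the derived unit and counit are weak equivalences on cofibrant and fibrant objects; this holds because both replacement functors are available throughout $\M$ by Proposition \ref{prop:semi-combinatorial-facts}. With these facts in place, Dugger's argument transposes directly to yield the claimed presentation.
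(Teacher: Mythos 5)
Your proposal is correct and follows essentially the same route as the paper: adapt Dugger's presentation theorem with $\cat C$ a small category of cofibrant $\lambda$-presentable objects, use cosimplicial resolutions in the Reedy semi-model structure (which only involve the cofibrant part of $\M$, where semi-model and model categories agree) together with Proposition \ref{prop:semi-combinatorial-facts} to get the Quillen equivalence $L_S(sSet^{\cat C^{op}}_{proj})\simeq \M$, and then pass to the injective structure to make all objects cofibrant. The only cosmetic difference is that the paper treats the simplicial and non-simplicial cases separately, following Dugger's Sections 3 and 6, while you run the cosimplicial-resolution argument uniformly.
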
 

\begin{proof}
Let $\lambda$ be the cardinal for which $\M$ is $\lambda$-locally presentable, and let $\M_\lambda$ denote a dense subcategory of $\M$ (such that every object of $\M$ is isomorphic to its canonical colimit with respect to $\M_\lambda$). Following \cite[Section 3]{dugger}, we first consider the case where $\M$ is a simplicial semi-model category (defined in \cite{goerss-hopkins}). We can produce a small category $\cat C$ and a left proper, simplicial, combinatorial model category $U\cat C := sSet^{\cat C^{op}}$ (the projective model structure) and a set of morphisms $S$ such that $L_S U\cat C$ is Quillen equivalent to $\M$. Use of the injective model structure on $sSet^{\cat C^{op}}$ provides the model where all objects are cofibrant. 

The key point is that finding a homotopically surjective map $U\cat C \to \M$ is equivalent to finding a function $\gamma: \cat C \to \M$ such that for every fibrant $X$ in $\M$, $X$ is weakly equivalent to the `canonical homotopy colimit' $\hocolim(\cat C \times \Delta \downarrow X)$. The category $\cat C$ is produced in \cite{dugger} as $\M_\lambda^{cof}$, the subcategory of cofibrant objects in $\M_\lambda$. Dugger's proof that this $\cat C$ has the required property boils down to the existence and homotopy invariance of cosimplicial resolutions (proven for semi-model categories in \cite{goerss-hopkins}) and the properties listed in Proposition \ref{prop:semi-combinatorial-facts}; see \cite[Proposition 4.7]{dugger}. It is important to note that none of Dugger's proofs require the model category axioms where model categories and semi-model categories differ. Dugger's results about $U\cat C$ work verbatim, whether $\M$ is a model category of semi-model category, e.g., Propositions 3.2 (noting that $LA^{cof}$ is cofibrant in the semi-model category $\cat N$), 4.2, and 4.6 (which only makes reference to the subcategory of cofibrant objects in $\M$).

For the case where $\M$ is not simplicial, we follow the proof in \cite[Section 6]{dugger}, replacing $\M$ by the category of cosimplicial objects $c\M$ with its Reedy semi-model structure \cite{barwickSemi, goerss-hopkins}. Dugger works in the subcategory $\cat C R$ of $c\M$ consisting of cosimplicial resolutions $A^*$ where $A^n \in \M_\lambda^{cof}$ for all $n$. Since everything in sight is cofibrant, Dugger's arguments work verbatim when $\M$ is only a semi-model category.
\end{proof}

\begin{remark}
Dugger's theorem is not the only way to replace a combinatorial semi-model category by a Quillen equivalent model category. An alternative way is to generalize \cite[Theorem 2.4]{ching-riehl} to semi-model categories. The new version states that, if $\M$ is a combinatorial and simplicial semi-model category, then there exists a simplicially-enriched cofibrant replacement comonad $c: \M \to \M$ that preserves $\lambda$-filtered colimits. Furthermore, the category $\M_c$ of $c$-coalgebras inherits a model structure from $\M$, transferred along the forgetful functor $U: \M_c\to \M$, where a morphism $f$ is a weak equivalence (resp. cofibration) if $U(f)$ is in $\M$. This is a full model structure because every object in $\M_c$ is cofibrant. This result can be proved by carefully reading \cite{ching-riehl} and noting that their categorical arguments apply directly to combinatorial semi-model categories (including the use of Garner's small object argument), and their homotopical arguments only ever require the model category axioms on cofibrations (rather than trivial cofibrations), hence work in semi-model categories. In particular, \cite[Lemma 1.6]{ching-riehl} works with the (cofibration, trivial fibration) factorization, and \cite[Theorem 2.4]{ching-riehl} uses the fact that cofibrations lift against trivial fibrations.  We omit the details because this plan has already been carried out, in fact in more generality (going from a combinatorial weak model category to a combinatorial model category) in \cite[Proposition 5.1]{bourke-henry}.
\end{remark}

\section{Applications} \label{sec:applications}

In this section we provide numerous applications of Theorem \ref{main}. Most of these applications are model categories that fail to be left proper. We begin with an important example of Voevodsky \cite{voevodsky} which we mentioned in the Introduction. We then discuss our main application, to our companion paper \cite{Reedy-paper}. Finally, we explore applications to categories of algebras over operads, spectra/stabilization, (weakly) enriched categories, and Goodwillie calculus.

\subsection{Radditive functors}
\begin{example} \label{ex:voevodsky}
In \cite{voevodsky}, Voevodsky introduced a model categorical framework for the study of simplicial extensions of functors. He defined a functor $F: \cat{C}^{op} \to Set$ to be radditive if $F(\emptyset) = pt$ and $F(X\coprod Y) \cong F(X)\times F(Y)$. He introduced a combinatorial model structure on the category of simplicial objects in radditive functors, but needed to assume it was left proper (e.g., in \cite[Theorem 3.46]{voevodsky}), in order to localize it. Indeed, \cite[Proposition 3.35]{voevodsky} characterizes when this model structure is left proper, and \cite[Example 3.48]{voevodsky} is a case where left properness fails and left Bousfield localization (as a model category) provably does not exist. The failure is that a certain pushout of a trivial cofibration is not a weak equivalence. 

However, this obstruction does not prevent the existence of a semi-model structure, and indeed, Theorem \ref{main} provides a semi-model structure for Voevodsky's example. Voevodsky writes his paper carefully, to prove results about the local homotopy category, even when left properness fails. With Theorem \ref{main}, the local homotopy category may be studied via the local semi-model structure, providing many tools beyond those available to Voevodsky (e.g., fibrant replacement for the computation of homotopy limits of diagrams of simplicial radditive functors).
\end{example} 

\subsection{Inverting Unary Operations}

The classical theory of localization of categories is concerned with inverting of a set of morphisms in a small category in a universal way. The resulting localized category can often  be studied through its category of presheaves. In the world of operads, we can also try to localize operads by inverting a specified set of unary operations, and then study algebras over these localized operads. In homotopy theory it is natural to require a form of weak invertibility. This means that we want to study localized operads and their algebras where the operations from a specified set of unary morphisms act as weak equivalences on the level of algebras. This is the main subject of our companion paper \cite{Reedy-paper}. We briefly describe the results from this paper here and refer the reader to \cite{Reedy-paper} for the details. 

\begin{example} The categories of presheaves with values in a model category $\M$ is a particularly simple case of the situation described above. Cisinski studied localizations of the covariant presheaf categories $[\calc,\M]$ in \cite{Cis06,cisinski-locally-constant} when $\M$ is a left proper combinatorial model category and $[\calc,\M]$ is equipped with the projective or injective model structure. The resulting localized category $[\calc,\M]^{loc}$ has as fibrant objects presheaves for which each morphism in $\calc$ acts as a weak equivalence in $\M.$ 
 
In \cite{Reedy-paper} Using Theorem \ref{main} we extend the results of Cisinski to an arbitrary combinatorial model category $\M$ and, moreover, we consider the semi-model Bousfield localization $[\calc,\M]^{W}$ whose fibrant objects are locally constant presheaves with respect to an arbitrary proper Grothendieck fundamental localizer $W$ and an arbitrary subset of morphisms of $\calc$. The case studied by Cisinski corresponds to the minimal fundamental localizer $W=W_{\infty}.$
 \end{example} 
 
\begin{example} \label{ex:localisation of algebras} 
In \cite{Reedy-paper}, we extend the example above, and localize the category of algebras of a $\Sigma$-free colored operad $P$ by lifting Cisinski's localizations $[\calc,\M]^{W}$ to the category of algebras of $P$ with values in a combinatorial symmetric monoidal model category $\M.$ This is where we need the full power of our Theorem \ref{main} because even if $\M$ is left proper, the projective model structure on the category of algebras of $P$ is most often not a left proper category  \cite{hackney} (for this category to be left proper we need $\M$ to be strongly $h$-monoidal and $P$ be tame, which is a rare occasion \cite{batanin-berger}).    
 \end{example} 
 
 \begin{example} \label{ex:main-application}
 Our main application, contained in \cite{Reedy-paper}, proves a strong form of the Baez-Dolan stabilization hypothesis \cite{baez-dolan}, using $k$-operads valued in weak $n$-categories \cite{Rezk} to model $k$-tuply monoidal weak $n$-categories. 
  We apply Theorem \ref{main} to construct a semi-model categorical left Bousfield localization of the category of $k$-operads, with respect to the  Grothendieck fundamental localizer of $n$-homotopy types, $W_n$, as we now describe.

 Let $\M$ be a combinatorial monoidal model category with cofibrant unit.
 The category $Op_k(\M)$ of $k$-operads in $\M$ is encoded as the category of algebras of a $\Sigma$-cofibrant colored operad whose underlying category is the opposite of the category of quasibijections of $k$-ordinals $Q_k^{op}.$ Hence, $Op_k(\M)$ has a semi-model  structure transferred from the projective model category structure on the category $[Q_k^{op},\M]$, as we prove in \cite{Reedy-paper}. Following Example \ref{ex:localisation of algebras} we now construct the localization of the category of $k$-operads $Op^{W_n}_k(\M)$ whose fibrant objects are $W_n$-locally constant $k$-operads i.e. $k$-operads whose underlying presheaves on $Q_k^{op}$ are $W_n$-locally constant. This lifts \cite[Theorems 7.1 and 7.2]{batanin-locally-constant} from the homotopy category level to the semi-model category level.
 
 We then prove the following Stabilization Theorem for $k$-operads: if $k\ge n+2$ the natural pair of adjoint functors (suspension and restriction) between  $Op^{W_n}_k(\M)$ and $Op^{W_n}_{k+i}(\M)$ is a Quillen equivalence for any $1\le i \le \infty.$ This is our strong form of the Baez-Dolan stabilization hypothesis. The original Baez-Dolan stabilization for $k$-tuply monoidal weak $n$-categories follows from this Theorem if we take  as $\M$ to be Rezk's model category of $\Theta_n$-spaces \cite{Rezk} and consider the value of the left derived  suspension functor on a contractible $k$-operad (see  \cite{batanin-baez-dolan-via-semi} for an explanation how to model  $k$-tuply monoidal weak $n$-categories as algebras of $k$-operads).
  
\end{example}

Other approaches to the problem of weakly inverting of unary operations in operads were recently proposed. We briefly describe them below and indicate where our results can be used for further improvement.

\begin{example} \label{ex:tillmann}
Motivated by topological and conformal field theories, two recent papers \cite{tillmann1, tillmann2} provide an analogue for operads of the Dwyer-Kan hammock localization of categories, that allows one to weakly invert some unary operations in an operad. Algebras of such `localized' operads can be interpreted as algebras of the original operad where some set of unary operations are weak homotopy equivalences (see section 6 of \cite{tillmann1}). This localization can be studied with Theorem \ref{main}. 
\end{example}

\begin{example} \label{ex:lazarev}
In \cite{lazarev}, localizations at the level of $R$-modules are compared to localizations of dg-algebras, where $R$ is a dg-ring. As observed in \cite[Remark 2.13]{lazarev}, the category of dg-$R$-algebras is not left proper in general (it is if $R$ is a field), and thus cofibrant replacements of dg-algebras are often required in \cite{lazarev}. Theorem \ref{main} can be used to streamline the exposition of \cite{lazarev}, by providing localizations even when left properness fails. Recalling that operads are monoids with respect to the circle product \cite{white-yau}, we could also use Theorem \ref{main} to extend the ideas in \cite{lazarev} to the study of localization of operations in categories of operads, algebras, and modules.
\end{example}

\subsection{Localizing categories of algebras over colored operads}

One of the crucial ideas in Example \ref{ex:main-application} is that a localization of a category of algebras has especially nice properties if it coincides with an appropriate transferred semi-model structure, as explored in \cite{batanin-white-eilenberg-moore}. One of the main technical achievements of our paper \cite{Reedy-paper} is the set of nontrivial combinatorial conditions on the operad $P$ when such a coincidence does occur.

In fact, this kind of situation, of wanting to localize a transferred (semi-)model structure, is ubiquitous. In the following examples, we always use $I$ to denote the generating cofibrations, and we recall that a model category is called \textit{tractable} if it is combinatorial and has the domains of the generating (trivial) cofibrations cofibrant \cite{barwickSemi}. The following lemma is often useful, and requires slightly less than tractability:

\begin{lemma} \label{lemma:tractable-transfer}
Suppose $\M$ is a combinatorial model category with domains of the generating (trivial) cofibrations cofibrant. Suppose $F: \M \leftrightarrows \cat N: U$ is an adjoint pair such that the monad $T = U \circ F$ is accessible (preserves $\lambda$-directed colimits). Suppose $\cat N$ admits a transferred model structure from $\M$, i.e., a morphism in $\cat N$ is a weak equivalence (resp. fibration) if and only if $U(f)$ is in $\M$. Then $\cat N$ is combinatorial and has domains of the generating (trivial) cofibrations cofibrant.
\end{lemma}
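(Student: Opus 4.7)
The plan is to take $F(I)$ and $F(J)$ as the generating cofibrations and generating trivial cofibrations of $\cat{N}$, where $I$ and $J$ are those of $\M$, and then to verify combinatoriality (local presentability plus cofibrant generation by these small sets) together with cofibrancy of the domains.

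For local presentability of $\cat{N}$, the hypothesis that $\cat{N}$ admits a transferred model structure typically comes bundled with $U$ being monadic, so that $\cat{N}$ is equivalent to the Eilenberg--Moore category of $T$-algebras on $\M$. Combined with the accessibility of $T$ and the local presentability of $\M$, a standard result of Ad\'amek--Rosick\'y yields that the category of $T$-algebras, hence $\cat{N}$, is locally presentable. For the generating set claim, I would invoke the usual transfer-by-adjunction argument: a morphism $f$ in $\cat{N}$ lies in $\inj F(I)$ iff $U(f) \in \inj I$ iff $U(f)$ is a trivial fibration in $\M$, which by definition of the transfer is equivalent to $f$ being a trivial fibration; the same reasoning with $J$ handles the fibrations. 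Smallness of the domains of $F(I)$ (resp.\ $F(J)$) relative to $F(I)$-cell (resp.\ $F(J)$-cell) follows by passing through $U$, which preserves sufficiently filtered colimits by the accessibility of $T$, and then invoking the smallness of the domains of $I$ and $J$ in $\M$.

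For cofibrancy of the domains, suppose $A$ is the domain of a map in $I$ or $J$. By hypothesis $A$ is cofibrant in $\M$, so $\emptyset_{\M} \to A$ lies in $\cof I$. Since $F$ is a left adjoint it preserves the initial object and all colimits, and therefore carries $I$-cell complexes to $F(I)$-cell complexes; because $\cof I$ is the class of retracts of such cell complexes, we conclude $F(\cof I) \subseteq \cof F(I)$. Hence $\emptyset_{\cat{N}} = F(\emptyset_{\M}) \to F(A)$ is a cofibration in $\cat{N}$, so $F(A)$ is cofibrant, as required. The main obstacle in all of this is the first step, establishing local presentability of $\cat{N}$, and this is precisely what the accessibility hypothesis on $T$ is designed to deliver; the remaining verifications are then formal consequences of the adjunction and the definition of the transferred structure.
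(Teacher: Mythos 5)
Your proposal is correct and takes essentially the same route as the paper: local presentability of $\cat N$ via the Ad\'amek--Rosick\'y result on algebras over an accessible monad (the paper, like you, implicitly treats $\cat N$ as monadic over $\M$), the standard adjunction argument identifying $F(I)$ and $F(J)$ as generating (trivial) cofibrations, and cofibrancy of the domains $F(A)$ from cofibrancy of $A$ in $\M$. The only cosmetic difference is the last step: the paper uses ``a simple lifting argument, using the adjunction'' (transposing a lifting problem against a trivial fibration through $F \dashv U$), whereas you deduce $F(\cof I)\subseteq \cof F(I)$ from preservation of $I$-cell complexes and retracts by the left adjoint --- two interchangeable formal arguments.
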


\begin{proof}
First, $\cat N$ is locally presentable because $T$ is accessible, by \cite[2.47, 2.78]{adamek}. It is standard (see, e.g., \cite{white-yau}) that the generating (trivial) cofibrations are of the form $F(I)$ (resp. $F(J)$) where $I$ (resp. $J$) are the generating (trivial) cofibrations of $\M$. To see that the domains of morphisms in $F(I)$ (resp. $F(J)$) are cofibrant in $\cat N$ is now a simple lifting argument, using the adjunction. 
\end{proof}

Finally, we recall that Jeff Smith's category of $\Delta$-generated spaces is a tractable model category Quillen equivalent to the usual model category of spaces, as has been proven in a preprint of Dan Dugger, and in published work of Philippe Gaucher. Details of this model structure, as well as how to build a tractable model for orthogonal spectra based on $\Delta$-generated spaces, may be found in \cite{white-localization}.

\begin{example} \label{ex:Goerss-Hopkins}
The resolution model structure, also known as the $E_2$-model structure, is described in \cite{goerss-hopkins}. It was introduced by Dwyer, Kan, and Stover in the context of pointed topological spaces, and generalized by Bousfield to the setting of general left proper model categories $\M$. It is a model structure on cosimplicial objects $c\M$, with more weak equivalences than the usual Reedy model structure. The weak equivalences are morphisms that induce an isomorphism on the $E_2$-term of certain spectral sequences (or on the $E^2$-term for simplicial objects). If $\M$ is tractable, then so is the resolution model structure, as shown in \cite[1.4.10]{goerss-hopkins} (see also right after Theorem 1.4.6).

In \cite{goerss-hopkins}, Goerss and Hopkins transfer the resolution model structure on simplicial spectra (any choice of $S$-modules, orthogonal spectra, or symmetric spectra) to simplicial $T$-algebras, for a well-behaved simplicial operad $T$. As the positive (or positive flat) model structure is used on symmetric spectra, the model category of $T$-algebras is tractable by Lemma \ref{lemma:tractable-transfer} (at least, if a combinatorial model is used for spaces as recalled above), but not left proper. For this reason, Goerss and Hopkins developed a semi-model categorical localization technique (using the language of cellular, rather than combinatorial, model categories) to prove the existence of $E_*$-localization, for a generalized homology theory $E$, on simplicial $T$-algebras.  The existence of $E_*$-localization as a semi-model category also follows from Theorem \ref{main}, recovering Theorem 1.5.1 of \cite{goerss-hopkins}.
\end{example}

A similar example arises in the study of $TQ$-localization for categories of algebras over an operad $\cat O$ acting in spectra. If a combinatorial model category of spectra is used, such as the positive model structure on symmetric spectra, then for any $\cat O$, the category of $\cat O$-algebras admits a transferred tractable model structure, $\M$, by Lemma \ref{lemma:tractable-transfer} and \cite{white-yau}, and hence Theorem \ref{main} applies.

\begin{example} \label{ex:harper-zhang}
In order to left Bousfield localize with respect to the class of $TQ$-homology isomorphisms (or $TQ$-homology with coefficients), we must first reduce to a set $\cat C$ of morphisms. This is done in \cite{harper-zhang}. While $\cat M$ is almost never left proper, the semi-model categorical localization $L_{\cat C} \M$ guaranteed by Theorem \ref{main} matches that of \cite[Theorem 5.14]{harper-zhang}, providing an alternative proof of the main result of \cite{harper-zhang}.
\end{example}

We conclude with one more example of localizing categories of algebras over operads in spectra.

\begin{example}\label{ex:beardsley}
In \cite{beardsley}, Beardsley initiates a program of learning about the homotopy groups of Ravenel's $X(n)$-spectra via $E_k$-cell attachments. The first setting of the paper is $E_1$-$X(n)$-algebras, i.e., $E_1$-algebras in the monoidal category of $X(n)$-modules. This category admits a transferred, tractable, left proper model structure (by Lemma \ref{lemma:tractable-transfer} and \cite{batanin-berger}). Beardsley constructs localizations of this model structure with respect to a prime $p$, and his techniques could also be used to localize with respect to $E_*$-equivalences for various generalized cohomology theories $E$, such as $K(n)$. Beardsley next introduces an $E_k$-monoidal analogue of $X(n)$, denoted $X(n,k)$. His work attaching $E_k$-cells most naturally takes place in $E_k$-$A$-algebras (where $A$ is one of the spectra $X(n,k)$), and as categories of $E_k$-algebras are not known in general to be left-proper, Theorem \ref{main} is required to construct the left Bousfield localizations for this new setting, and to prove (following \cite{batanin-white-eilenberg-moore}) that these localizations play nicely with colimits in categories of $E_k$-$A$-algebras.
\end{example}

\subsection{Stabilization and Spectra}

One of the most common applications of left Bousfield localization is to build stable model categories of spectra in some base model category $\M$ \cite{hovey-spectra}. The idea here is to first build spectra $Sp(\M)$ as sequences of objects of $\M$, with levelwise weak equivalences, and then localize with respect to stable equivalences (relative to some endofunctor $G$ on $\M$ that generalizes reduced suspension on pointed spaces). If $\M$ is left proper, then so is the levelwise model structure on $Sp(\M)$. Otherwise, it is not known how to build the stable model structure on $Sp(\M)$. There are many places in the literature where various authors wished to build a stable model structure, but could not because $\M$ was not known to be left proper. We review several such places below, and more in Examples \ref{ex:pereira} and \ref{ex:vicinsky}.

\begin{example} \label{ex:intermont}
In \cite{johnson}, Intermont and Johnson introduced model structures for the category of ex-spaces, suitable for the study of parameterized unstable homotopy theory. Both their coarse model structure (which is left proper \cite[Proposition 3.1]{johnson}) and their $\cat U$-model structure (which is not known to be left proper \cite[Remark 5.6]{johnson}) have several improvements over the model structure used by May and Sigurdsson. However, it is left as an open problem to construct a suitable homotopy theory for parameterized spectra based on the $\cat U$-model structure. With Theorem \ref{main}, this problem can be solved. As the $\cat U$-model structure is obtained as a transfer from $Top$, it will be tractable if a tractable model structure (e.g. $\Delta$-generated spaces) for spaces is chosen, by Lemma \ref{lemma:tractable-transfer}. With that tractable model structure in hand, Hovey's stabilization machinery \cite{hovey-spectra} may be used, resulting in a stable semi-model structure for parameterized spectra based on the $\cat U$-model structure.
\end{example}

\begin{example} \label{ex:monoids-in-spectra}
Let $\M$ be a combinatorial monoidal semi-model category and $K$ a cofibrant object of $\M$. Mark Hovey's stabilization machinery \cite{hovey-spectra} produces the category $Sp^{\Sigma}(\M,K)$ of symmetric spectra valued in $\M$, with respect to the endofunctor $- \otimes K$. The argument of \cite[Theorem 8.2]{hovey-spectra} works without change to produce the projective semi-model structure on $Sp^{\Sigma}(\M,K)$, which is again combinatorial, and is monoidal by the argument of \cite[Theorem 8.3]{hovey-spectra}. Theorem \ref{main} then produces the stable semi-model structure on $Sp^{\Sigma}(\M,K)$, where $-\otimes K$ is a Quillen equivalence. This is a semi-model version of \cite[Theorem 8.11]{hovey-spectra}, but without the need to assume $\M$ is left proper. Just after \cite[Theorem 8.11]{hovey-spectra}, Hovey considers the category of $R$-modules for a given monoid $R$ in $Sp^{\Sigma}(\M,K)$. Even when $\M$ is a left proper model category, Hovey is unable to prove the existence of the stable model structure on $R$-modules or on $R$-algebras, in the latter case writing `This plan will certainly fail for the category of monoids, since the projective model structure on monoids will not be left proper in general.' With our Theorem \ref{main}, we are able to carry out Hovey's plan. Precisely, we apply \cite[Theorem 2.2.1]{Reedy-paper} to produce the transferred projective semi-model structure on $R$-modules or on $R$-algebras, and then we apply Theorem \ref{main} to localize these semi-model structures to obtain stable semi-model structures on each category. The same plan can be carried out for algebras over any $\Sigma$-cofibrant colored operad, and indeed for more general classes of colored operads if $\M$ is made to satisfy more conditions \cite{white-yau} (or \cite{white-commutative-monoids} for commutative monoids). Hovey's alternative idea, of finding conditions so that localization preserves the monoid axiom (so that the stable model structure on $Sp^{\Sigma}(\M,K)$ satisfies the monoid axiom) has also been worked out, in \cite{white-localization}. We do not need the monoid axiom with the present approach, since it is only there to get from a semi-model structure on monoids to a full model structure, but a semi-model structure is sufficient for Theorem \ref{main}.

\end{example}

\begin{example} \label{ex:kasparov}
In \cite[Theorem 9.6]{kk}, Joachim and Johnson introduced a model structure on a particular category of $C^*$-algebras, that can be used in the study of Kasparov's $KK$-theory. This model structure is obtained as a transfer from the category $Top_*$ of pointed topological spaces (but where the left adjoint $F$ is only defined on compact spaces). The generating cofibrations have the form $F(i_k)$ where $i_k: S^{k-1}_+\to D^K_+$, and hence have cofibrant domains. The category $Top_*$ is not locally presentable, but if a combinatorial model category of spaces is used (e.g., $\Delta$-generated spaces), then the Joachim-Johnson model would satisfy the conditions of Theorem \ref{main}, by Lemma \ref{lemma:tractable-transfer}. A desirable localization is pointed out in the introduction to \cite{ostvaer}: namely, to study the stable $C^*$-homotopy category. Theorem \ref{main} provides a semi-model category whose homotopy category is the stable $C^*$-homotopy category built from the Joachim-Johnson model, analogously to \cite[Theorem 4.12]{ostvaer}. Analogously to \cite[Theorem 4.70]{ostvaer} (following \cite{hovey-spectra}), one can also build a monoidal semi-model structure for symmetric spectra in the Joachim-Johnson model, semi-model structures for modules and monoids \cite[Theorem 4.72]{ostvaer}, and other localizations such as the exact semi-model structure \cite[Theorem 3.32]{ostvaer}, the matrix invariant projective semi-model structure \cite[Theorem 3.54]{ostvaer}, or the homotopy invariant model structure \cite[Theorem 3.65]{ostvaer}. In addition to the study of Kasparov's $KK$-theory, these model structures have applications to the $E$-theory of Connes-Higson \cite[Remark 3.29]{ostvaer}. Analogously to \cite{chorny-white}, one can also build semi-model structures for homotopy functors between the model categories above \cite[Theorem 4.84]{ostvaer}, or the stable semi-model structure on functors \cite[Theorem 4.94]{ostvaer}. This is a first step towards applying Goodwillie calculus to $C^*$-algebras.
\end{example}

We conclude with an example about the connection between spaces and chain complexes.

\begin{example} \label{ex:richter}
In \cite{richter}, Richter and Shipley construct a chain of Quillen equivalences between commutative algebra spectra over $HR$ (where $R$ is a commutative ring), and $E_\infty$-monoids in unbounded chain complexes of $R$-modules. Doing so requires lifting the Dold-Kan equivalence to commutative monoids in symmetric sequences $C(sAb^\Sigma)\leftrightarrows C(ch^\Sigma)$, and then from symmetric sequences to symmetric spectra. As pointed out in \cite[Remark 6.4]{richter}, the positive model structure on $C(ch^\Sigma)$ is not left proper (but is tractable). However, there is a long history of lifting localizations from the level of chain complexes to the level of spectra \cite{bauer}, and so Theorem \ref{main} is an important first step to carry this program out for localizations of commutative $HR$-algebra spectra lifted from $C(ch^\Sigma)$. There are many interesting localization of chain complexes, catalogued in \cite{white-localization, white-yau-coloc}, that can be carried out for $C(ch^\Sigma)$ using Theorem \ref{main}.
\end{example}

\subsection{Enriched categories}

The setting of enriched categories provides several examples of desirable left Bousfield localizations in non-left proper settings, with potential applications to factorization algebras. We begin with an example about weakly enriched categories.

\begin{example} \label{ex:bacard}
In \cite{bacard-published}, Bacard introduced a model structure for the study of ``co-Segal categories,'' which are weakly enriched categories over a symmetric monoidal model category $\M$, satisfying a Segal-style weak equivalence rather than the usual composition law. In \cite{bacard}, Bacard sought to improve the theory of co-Segal categories to shift from non-unital weak $\M$-categories to unital precategories. To do so, Bacard needed what he called an {\em implicit Bousfield localization} (page 4 of \cite{bacard}), because his ``easy model structure'' on co-Segal precategories is not known to be left proper (but, it is tractable). As \cite{bacard} was never published, it is difficult to know if the implicit Bousfield localization worked. With Theorem \ref{main}, it is possible to achieve Bacard's goal, when $\M$ is a combinatorial model category with domains of the generating cofibrations cofibrant.

Bacard defines a co-Segal precategory with object set $X$ as a normal lax functor of 2-categories $\cat C: (\cat S_X)^{op} \to \M$, for a particular 2-category $\cat S_X$. The components are denoted $\cat C_{AB}: \cat S_X(A,B)^{op} \to \M$. Bacard is forced to discard his first notion of weak equivalence of co-Segal precategories, because it does not lead to a left proper model structure. This notion defines a weak equivalence to be a morphism $(\sigma, f): \cat C \to \cat D$ such that each natural transformation $\sigma_{AB}$ is a levelwise weak equivalence in $\cat M$. With Theorem \ref{main}, one could carry out the program of \cite{bacard} for either the ``easy model structure'' or for his notion of strict $\M$-categories, as long as $\M$ is a combinatorial model category with domains of the generating cofibrations cofibrant. Lastly, throughout \cite{bacard}, Bacard has statements that assume the existence of various left Bousfield localizations (e.g., 3.23, 3.44, 4.6, 4.14). Theorem \ref{main} can be used to verify that these localizations exist, and also to weaken the requirement that $\M$ be left proper.
\end{example}

We conclude with an application to the theory of dg-categories, i.e., categories enriched in chain complexes over a fixed commutative ring $k$.

\begin{example} \label{ex:dgcat}
The category $\dgcat(k)$ of small dg-categories, admits a tractable model structure \cite[Theorem 1.7]{tabuada} that is much used in the study of derived algebraic geometry \cite{toen}. This model category fails to be left proper in general \cite[Example 1.22]{tabuada}, unless strong conditions are placed on $k$. Nevertheless, To\"{e}n is able to construct localizations of its homotopy category \cite[Corollary 8.7]{toen} (building on earlier work of Drinfeld, Keller, and Lyubashenko).  Theorem \ref{main} allows us to lift To\"{e}n's localizations from the homotopy category level to the semi-model category level. Furthermore, Tabuada introduces a model structure on $\dgcat(k)$ with weak equivalences the pretriangulated equivalences \cite[Theorem 1.30]{tabuada} as a left Bousfield localization of the model structure above \cite[Proposition 1.33]{tabuada}, and then introduces the Morita model structure as a left Bousfield localization of the pretriangulated model structure \cite[Proposition 1.39]{tabuada}. This is done despite the fact that all of these model structures fail to be left proper. Using Theorem \ref{main}, we can actually achieve the Morita model structure as a left Bousfield localization (including the universal property), and \cite[Theorem 1.37]{tabuada} tells us the local semi-model structure is in fact a full model structure. Lastly, Theorem \ref{main} can be used to prove the existence of various localizations of the model structures above desired by Tabuada, e.g. Theorems 8.5, 8.17, and 8.25, Remark A.10, and Section 2.2.6 of \cite{tabuada}.
\end{example}

\subsection{Functor Calculus}

Another application of Theorem \ref{main} is to Goodwillie calculus for general model categories. The following example is motivated by Goodwillie's work studying categories of functors between categories of spaces and spectra. Our treatment follows Pereira \cite{pereira}, who seeks a version of Goodwillie calculus for functors between categories of algebras over operads. The main idea is to recast Goodwillie's $n$-excisive approximation as a left Bousfield localization, as was done previously by Biedermann, Chorny, and R\"{o}ndigs, but Pereira's setting is not left proper.

\begin{example} \label{ex:pereira}
Let $\cat O$ be a simplicial operad. Let $\cat C$ be a pointed simplicial model category such that the stable projective model structure on spectra $Sp(\algo(\cat C))$ exists. This occurs, for example, if $\cat C$ has domains of the generating cofibrations cofibrant. Even if $\cat C$ is simplicial sets or spectra, $\algo$ is almost never left proper, as Pereira shows (one case where is is left proper is if $\cat O$ is the Com operad \cite{white-commutative-monoids}).

Let $A$ and $B$ be ring spectra, $D$ a small subcategory of $A$-modules, and let $\M_1$ (resp. $\M_2$) be the category of simplicial functors $Fun(D,Mod_B)$ (resp. spectral functors). Instead of taking $D$ to be a small subcategory, one could alternatively study small functors $Fun^s(Mod_A,Mod_B)$, i.e. functors that are left Kan extensions of functors determined on a small subcategory, as is done in \cite{chorny-white}. Using Theorem \ref{main}, the projective model structures on $\M_i$ admit several left Bousfield localizations, where the new fibrant objects are homotopy functors, or linear functors, or $n$-excisive functors. For the latter, the localization $X\to L(X)$ is Goodwillie's $n$-excisive approximation.

Remark 4.11 in \cite{pereira} explains why having this localization on the model (or semi-model) level would be desirable. Pereira is able to prove an equivalence of homotopy categories, which can be lifted to an equivalence of semi-model categories using Theorem \ref{main}. An application is the characterization of the Goodwillie tower of the identity in $\algo$ as the homotopy completion tower associated to truncated operads $\cat O_{\leq n}$.

\end{example}

Other contexts where one might wish to extend the techniques of Goodwillie calculus (other than to $\algo$ as in the example above) include the setting of graph theory or the category of small categories. We discuss these settings now.

\begin{example} \label{ex:vicinsky}
In \cite{deb}, Vicinsky worked out the homotopy-theoretic foundations required to apply Goodwillie calculus to model categories of graphs and small categories. Traditionally, this requires a model structure $\M$ that is pointed, left proper, and simplicial \cite[Hypothesis 2.28]{deb}. Partially, this is required to build a stable model structure for spectra $Sp(\M)$. However, the model structure used by Vicinsky on pointed directed graphs (originally due to Bisson and Tsemo) is not left proper \cite[Proposition 5.8]{deb}. However, Theorem \ref{main} can be used to carry out the program laid out by Vicinsky: constructing a semi-model structure for spectra on graphs, then proving this category is homotopically trivial. Lastly, Theorem \ref{main} may be used to verify Vicinsky's Conjecture 6.10. Her model structures $Cat_n$ are transferred from $n$-truncated model structures on simplicial sets \cite[Proposition 6.8]{deb}, and hence are tractable by Lemma \ref{lemma:tractable-transfer}. Theorem \ref{main} provides a semi-model structure on spectra in $Cat_n$, and hence a framework to lift the Quillen equivalence $sSet_n \leftrightarrows Cat_n$ to categories of spectra, as required to prove \cite[Conjecture 6.10]{deb}.
\end{example}

 \subsection{Other possible directions}

We conclude with some suggestions about possible future applications  that don't fit into the categories above. 

In \cite{bergner}, Bergner defined the notion of a homotopy colimit of a diagram of model categories $\M_i$, as a quotient of a coproduct $\coprod M_i$. Dualizing her earlier work on homotopy limits, which she studied as a right Bousfield localization, the quotient required in a homotopy colimit can be studied as a left Bousfield localization. As Bergner points out, there are a number of technical difficulties, but Theorem \ref{main} can be used to circumvent the requirement that the coproduct (semi-)model structure be left proper, just as Barwick's right Bousfield localization \cite{barwickSemi} was used in the study of homotopy limits. The upside of this approach is that having a semi-model structure for the homotopy colimit gives more structure for computations than simply the relative category structure constructed in \cite{bergner}.

Another source of potential applications of Theorem \ref{main} would be to do left Bousfield localization after a right Bousfield localization, since right Bousfield localization often destroys left properness. This would occur, for example, if one wished to build spectra or do Goodwillie calculus for a model structure defined as a right Bousfield localization, such as model structures used for the study of slices in the slice spectral sequence, or $A$-cellular model structures in spaces, chain complexes, and categories \cite{white-yau-coloc}.

While it may seem that we have exhaustively cataloged all situations where one wishes to do left Bousfield localization without left properness, we are confident that there are in fact many more cases where Theorem \ref{main} will be useful. We also believe Theorem \ref{main-smith} will be useful in its own right.


\begin{thebibliography}{10}

\bibitem[AR94]{adamek} J. Adamek and J. Rosicky, \textit{Locally Presentable and Accessible Categories}, Cambridge University Press,  London Mathematical Society Lecture Note Series (189), 1994.

\bibitem[Bac13]{bacard} Hugo Bacard, Pursuing Lax Diagrams and Enrichment, preprint available as arXiv:1312.7833.

\bibitem[Bac14]{bacard-published} Hugo Bacard, 
 Toward weakly enriched categories: co-Segal categories,
\textit{Journal of Pure and Applied Algebra}, 218, pp.1130-1170, 2014.


\bibitem[BD95]{baez-dolan}
John~C. Baez and James Dolan.
\newblock Higher-dimensional algebra and topological quantum field theory.
\newblock {\em J. Math. Phys.}, 36(11):6073--6105, 1995.

\bibitem[Bar10]{barwickSemi}
Clark Barwick.
\newblock On left and right model categories and left and right {B}ousfield
  localizations.
\newblock {\em Homology, Homotopy Appl.}, 12(2):245--320, 2010.

\bibitem[BBPTY17a]{tillmann1} Maria Basterra, Irina Bobkova, Kate Ponto, Ulrike Tillmann, Sarah Yeakel \newblock Inverting operations in operads. \newblock \textit{Topology and its Applications}, 235 pp.130-145, 2017.

\bibitem[BBPTY17b]{tillmann2} Maria Basterra, Irina Bobkova, Kate Ponto, Ulrike Tillmann, Sarah Yeakel \newblock Infinite loop spaces from operads with homological stability. \newblock \textit{Advances in Mathematics}, 321 pp.391-430, 2017.

\bibitem[Bat10]{batanin-locally-constant} Michael Batanin. \newblock Locally constant $n$-operads as higher braided operads, \newblock {\em J. Noncommut. Geom.}, 4, pp.237-263, 2010.


\bibitem[Bat17]{batanin-baez-dolan-via-semi}
Michael Batanin.
\newblock An operadic proof of the {B}aez-{D}olan stabilisation hypothesis,
  {\em Proceedings of the AMS} 145, 2785-2798, 2017. 

\bibitem[BB17]{batanin-berger}
Michael Batanin and Clemens Berger.
\newblock Homotopy theory for algebras over polynomial monads, \textit{Theory and Application of Categories}, Vol. 32, No. 6, 148-253, 2017.


\bibitem[BD19]{florian} Michael Batanin and Florian De Leger, Polynomial monads and delooping of mapping spaces, {\em J. Noncommut. Geom.}, Volume 13, Issue 4, pp. 1521-1576, 2019.

\bibitem[BDW23a]{bdw1}
Michael Batanin, Florian De Leger, and David White.
\newblock Model structures on operads and algebras from a global perspective. Available as arXiv:2311.07320.

\bibitem[BDW23b]{bdw2}
Michael Batanin, Florian De Leger, and David White.
\newblock Quasi-tame substitudes and model structures on Grothendieck constructions. Available as arXiv:2311.07322.

\bibitem[BW15]{batanin-white-baez-dolan}
Michael Batanin and David White.
\newblock Baez-Dolan Stabilization via (Semi-)Model Categories of Operads, in ``Interactions between Representation Theory, Algebraic Topology, and Commutative Algebra,'' \textit{Research Perspectives CRM Barcelona}, Volume 5 (2015), pages 175-179, ed. Dolors Herbera, Wolfgang Pitsch, and Santiago Zarzuela, Birkh\"{a}user.



\bibitem[BW21]{batanin-white-eilenberg-moore}  M. Batanin and D. White, Left Bousfield localization and Eilenberg-Moore Categories, \textit{Homology, Homotopy and Applications}, vol. 23(2), pp.299-323, 2021. 


\bibitem[BW22]{Reedy-paper} Michael Batanin and David White. Homotopy theory of algebras of substitudes and their localisation. \textit{Transactions of the American Mathematical Society}, Volume 375, Number 5, May 2022, Pages 3569-3640.

\bibitem[Bau99]{bauer} Friedrich Bauer, The {B}oardman category of spectra, chain complexes and \newblock 
(co-)localizations, \textit{Homology Homotopy Appl}. (1), 95-116, 1999.

\bibitem[Bea19]{beardsley} Jonathan Beardsley, A Theorem on Multiplicative Cell Attachments with an Application to Ravenel's X(n) Spectra, \textit{Journal of Homotopy and Related Structures} 14-3 (2019), 611-624.

\bibitem[Bek00]{beke} Tibor Beke, Sheafifiable homotopy model categories, \textit{Mathematical Proceedings of the Cambridge Philosophical Society}, Volume 129, Issue 3, 2000, pp. 447-475.

\bibitem[BM07]{bm07} Clemens Berger and Ieke Moerdijk, Resolution of coloured operads and rectification of homotopy algebras, Categories in algebra, geometry and mathematical physics, \textit{Contemporary Mathematics} 431 (American Mathematical Society, Providence, RI, 2007) 31-58.

\bibitem[Ber14]{bergner} Julie Bergner, Homotopy colimits of model categories, An alpine expedition through algebraic topology, \textit{Contemp. Math.}, 617 (2014), 31-37.

\bibitem[BH22]{bourke-henry} John Bourke and Simon Henry, Algebraically cofibrant and fibrant objects revisited, \textit{Homology, Homotopy and Applications}, volume 24, issue 1 (2022), pp.271-298.


\bibitem[Bou75]{bousfield-localization-spaces-wrt-homology}
A.~K. Bousfield.
\newblock The localization of spaces with respect to homology.
\newblock {\em Topology}, 14:133--150, 1975.

\bibitem[Bou79]{bous79}
A.~K. Bousfield.
\newblock The localization of spectra with respect to homology.
\newblock {\em Topology}, 18(4):257--281, 1979.

\bibitem[BCL18]{lazarev} Christopher Braun, Joseph Chuang, Andrey Lazarev, Derived localisation of algebras and modules, \textit{Advances in Mathematics}, volume 328, issue 68, 555-622, 2018.

\bibitem[CR14]{ching-riehl} Michael Ching and Emily Riehl, Coalgebraic models for combinatorial model categories, \textit{Homology, Homotopy and Applications}, volume 16, issue 2 (2014), pp.171-184.

\bibitem[Cis06]{Cis06} Denis-Charles Cisinski, Les pr\'{e}faisceaux comme mod\'{e}les des types d'homotopie (French, with English and French summaries), \textit{Ast\'{e}risque} 308, xxiv+390, 2006.

\bibitem[Cis09]{cisinski-locally-constant} Denis-Charles Cisinski. Locally constant functors. \textit{Math. Proc. Cambridge Philos. Soc.} \textbf{147}, 593-614, 2009.

\bibitem[CW18]{chorny-white} Boris Chorny and David White. A variant of a Dwyer-Kan theorem for model categories, available as arXiv:1805.05378, to appear in \textit{Algebraic \& Geometric Topology}.

\bibitem[Dug01]{dugger} D. Dugger, Combinatorial Model Categories Have Presentations, 
\textit{Advances in Mathematics}, 
Volume 164, Issue 1, Pages 177-201, 2001.

\bibitem[Dwy06]{dwyerLoc} William G. Dwyer, Noncommutative localization in homotopy theory, in \textit{Noncommutative Localization in Algebra and Topology}, Cambridge University Press, 2006, pp 24-39.

\bibitem[EKMM97]{EKMM}
A.~D. Elmendorf, I.~Kriz, M.~A. Mandell, and J.~P. May.
\newblock {\em Rings, modules, and algebras in stable homotopy theory},
  volume~47 of  Mathematical Surveys and Monographs.
\newblock American Mathematical Society, Providence, RI, 1997.
\newblock With an appendix by M. Cole.

\bibitem[Fre09]{fresse-book}
B. Fresse.
\newblock {\em Modules over operads and functors}, volume 1967 of {\em Lecture Notes in Mathematics}.
\newblock Springer-Verlag, Berlin, 2009.

\bibitem[Gil16]{gillespie-survey} 
J. Gillespie. Hereditary abelian model categories, \textit{Bulletin of the London Mathematical Society} 48, no. 6, 895-922, 2016.

\bibitem[GH04]{goerss-hopkins} P. G. Goerss and M. J. Hopkins, Moduli problems for structured ring spectra, available at 
\newblock https://sites.math.northwestern.edu/$\sim$pgoerss/spectra/obstruct.pdf, 2004.

\bibitem[GRS{\O}12]{grso}
Javier~J. Guti{\'e}rrez, Oliver R{\"o}ndigs, Markus Spitzweck, and Paul~Arne
  {\O}stv{\ae}r.
\newblock Motivic slices and colored operads.
\newblock {\em Journal of Topology}, 5:727--755, 2012.

\bibitem[GW18]{gutierrez-white-equivariant}
Javier~J. Guti{\'e}rrez and David White.
\newblock Encoding equivariant commutativity via operads, \textit{Algebraic \& Geometric Topology}, Volume 18, Number 5, pp.2919-2962, 2018.


\bibitem[HRY17]{hackney} Phil Hackney, Marcy Robertson, and Donald Yau. 
Relative left properness of colored operads,  \textit{Algebr. Geom. Topol.}, Volume 16, Number 5 (2016), 2691-2714.

\bibitem[HZ19]{harper-zhang} John Harper and Yu Zhang, 
Topological Quillen localization of structured ring spectra, \textit{Tbilisi Math. J.}, Volume 12, Issue 3 (2019), 69-91.

\bibitem[Hen23]{henry} Simon Henry, Combinatorial and accessible weak model categories, \textit{Journal of Pure and Applied Algebra}, Volume 227, Issue 2 (2023).


\bibitem[Hir03]{hirschhorn}
Philip~S. Hirschhorn.
\newblock {\em Model categories and their localizations}.
\newblock Mathematical Surveys and Monographs. American Mathematical Society,
  Providence, RI, 2003.

\bibitem[Hov98]{hovey-monoidal}
Mark Hovey.
\newblock Monoidal model categories, preprint available electronically from
  http://arxiv.org/abs/math/9803002.
\newblock 1998.

\bibitem[Hov99]{hovey-book}
Mark Hovey.
\newblock {\em Model categories}, volume~63 of {\em Mathematical Surveys and
  Monographs}.
\newblock American Mathematical Society, Providence, RI, 1999.

\bibitem[Hov01]{hovey-spectra}
Mark Hovey.
\newblock Spectra and symmetric spectra in general model categories.
\newblock {\em J. Pure Appl. Algebra}, 165(1):63--127, 2001.

\bibitem[Hov02]{hovey-cotorsion} M. Hovey, Cotorsion pairs, model category structures, and representation theory, \textit{Math Z.} 241 (3), 553--592, 2002.

\bibitem[IJ02]{johnson} Michele Intermont, Mark Johnson, Model structures on the category of ex-spaces, \textit{Topology and its Applications}, Volume 119, Issue 3, 30 April 2002, Pages 325-353.

\bibitem[JJ07]{kk} Michael Joachim, Mark Johnson, Realizing Kasparov's $KK$-theory groups as the homotopy classes of maps of a Quillen model category, \textit{Contemporary Math} \#399: An Alpine Anthology of Homotopy Theory, Dominique Arlettaz and Katherine Hess editors.

\bibitem[Lur09]{lurie-htt} Jacob Lurie, \textit{Higher Topos Theory}, Annals of Mathematics Studies, book 170, Princeton University Press, 2009.

\bibitem[Man01]{mandell} Mike Mandell, $E_\infty$ algebras and $p$-adic homotopy theory, \textit{Topology}, Volume 40, Issue 1, January 2001, Pages 43-94.

\bibitem[Nui19]{nuiten} Joost Nuiten, Homotopical Algebra for Lie Algebroids, \textit{Applied Categorical Structures}, Volume 27, Issue 5 (2019), pp 493-534.

\bibitem[{\O}st10]{ostvaer} Paul Arne {\O}stv{\ae}r, \textit{Homotopy Theory of $C^*$-Algebras}, Birkhauser Basel, 2010.


\bibitem[Per17]{pereira} Lu\'{i}s Alexandre Pereira, Goodwillie calculus in the category of algebras over a spectral operads, preprint available from https://math.mit.edu/$\sim$luisalex/GoodwillieCalculus$\%$20inAlgO.pdf, accessed June 1, 2019.

\bibitem[Rav84]{rav84} Doug Ravenel, Localization with Respect to Certain Periodic Homology Theories, \textit{American Journal of Mathematics}, Vol. 106, No. 2, (Apr., 1984), pp. 351-414.

\bibitem[Rez10]{Rezk} C. Rezk, A Cartesian presentation of weak $n$-categories, \textit{Geom. Topol.} 14 (2010) 521-571.

\bibitem[RS17]{richter} Birgit Richter and Brooke Shipley, An algebraic model for commutative $H\mathbb{Z}$-algebras, \textit{Algebr. Geom. Topol.}, Volume 17, Number 4 (2017), 2013-2038.

\bibitem[Spi01]{spitzweck-thesis} Markus Spitzweck. \newblock Operads, algebras and modules in general model categories, preprint  available electronically from http://arxiv.org/abs/math/0101102.
\newblock 2001.

\bibitem[Tab15]{tabuada} Goncalo Tabuada, \textit{Noncommutative Motives}, AMS University lecture series 63, 2015.

\bibitem[To\"{e}10]{toen} Bertrand To\"{e}n, Homotopy theory of dg-categories and derived Morita theory, \textit{Inventiones mathematicae} volume 167, pages 615-667, 2007.

\bibitem[Vic15]{deb} Deborah Vicinsky, The homotopy calculus of categories and graphs, Ph.D. thesis, available electronically from \\
https://scholarsbank.uoregon.edu/xmlui/bitstream/handle/1794/19283/\\
Vicinsky$\_$oregon$\_$0171A$\_$11298.pdf?sequence=1, 2015.

\bibitem[Voe10]{voevodsky} Vladimir Voevodsky, Simplicial radditive functors, \textit{Journal of K-Theory}, 5 (2010), 201-244.


\bibitem[Whi17]{white-commutative-monoids}
David White.
\newblock Model structures on commutative monoids in general model categories. \newblock {\em Journal of Pure and Applied Algebra}, Volume 221, Issue 12, 2017, Pages 3124-3168. Available as arXiv:1403.6759.

\bibitem[Whi14]{white-thesis}
David White.
\newblock Monoidal {B}ousfield localizations and algebras over operads.
\newblock 2014.
\newblock Thesis (Ph.D.)--Wesleyan University.

\bibitem[Whi21a]{white-localization}
David White.
\newblock Monoidal {B}ousfield localizations and algebras over operads, Equivariant Topology and Derived Algebra, Cambridge University Press (2021), 179-239. 

\bibitem[Whi21b]{white-oberwolfach} David White. Substitudes, Bousfield localization, higher braided operads, and Baez-Dolan stabilization, \textit{Mathematisches Forschungsinstitut Oberwolfach}, Number 46, 2021: Homotopical Algebra and Higher Structures.





\bibitem[WY23]{white-yau4}
David White and Donald Yau. Right Bousfield Localization and Eilenberg-Moore Categories, \textit{Higher Structures} 7(1):22–39, 2023.


\bibitem[WY17]{white-yau6} David White and Donald Yau. Smith Ideals of Operadic Algebras in Monoidal Model Categories, available as arXiv:1703.05377, to appear in \textit{Algebraic \& Geometric Topology}.

\bibitem[WY18]{white-yau}
David White and Donald Yau.
\newblock {B}ousfield localizations and algebras over colored operads, 
\newblock {\em Applied Categorical Structures}, 26:153--203, 2018.
Available as arxiv:1503.06720.


\bibitem[WY19]{white-yau3} David White and Donald Yau, Homotopical adjoint lifting theorem, \textit{Applied Categorical Structures}, 27:385-426, 2019.

\bibitem[WY20]{white-yau-coloc}
David White and Donald Yau.
\newblock Right {B}ousfield localization and operadic algebras, \textit{Tbilisi Math. Journal}, Special issue (HomotopyTheorySpectra - 2020), 71-118, 2020. 


\bibitem[Yau19]{yau-book} Donald Yau, \textit{Homotopical Quantum Field Theory}, World Scientific, 2019.

\end{thebibliography}
\end{document}